\title[Appell--Lerch functions and false theta functions as $q$-brackets]{Quasi-Jacobi forms, Appell--Lerch functions, and\\ false theta functions as $q$-brackets\\ of functions on partitions}
\author{Kathrin Bringmann}  
\author{Jan-Willem van Ittersum} 
\author{Jonas Kaszian}
\date{} 
\newtheorem{thm}{Theorem}
\newtheorem{lem}[thm]{Lemma}
\newtheorem{cor}[thm]{Corollary}
\newtheorem{prp}[thm]{Proposition}
\theoremstyle{definition}
\newtheorem*{definition}{Definition}
\newtheorem*{rmk}{Remark}
\theoremstyle{remark}
\numberwithin{thm}{section}
\numberwithin{prp}{section}
\numberwithin{lem}{section}
\numberwithin{cor}{section}
\numberwithin{equation}{section}
\numberwithin{conjecture}{section}
\crefname{prp}{proposition}{propositions}
\crefname{equation}{}{}
\newcommand{\Z}{\mathbb{Z}}
\newcommand{\Q}{\mathbb{Q}}
\newcommand{\N}{\mathbb{N}}
\newcommand{\R}{\mathbb{R}}
\newcommand{\C}{\mathbb{C}}
\renewcommand{\H}{\mathbb{H}}
\renewcommand{\t}{\tau}
\newcommand{\z}{\zeta}
\newcommand{\Pmod}[1]{\,\,({\rm mod}\,\,{#1})}
\newcommand{\rb}[1]{\left({#1}\right)}
\newcommand{\sgn}{\operatorname{sgn}}
\newcommand{\SL}{\operatorname{SL}}
\newcommand{\bdd}{\begin{center}\begin{tikzcd}}
\newcommand{\bd}{\begin{tikzcd}}
\newcommand{\edd}{\end{tikzcd}\end{center}}
\newcommand{\ed}{\end{tikzcd}}
\newcommand{\bdp}{\begin{center}\begin{tikzpicture}}
\newcommand{\edp}{\end{tikzpicture}\end{center}}
\newcommand{\bi}{\begin{itemize}}
\newcommand{\ei}{\end{itemize}}
\newcommand{\bt}{\begin{tikzpicture}}
\newcommand{\et}{\end{tikzpicture}}
\newcommand{\ba}{\[\begin{aligned}}
\newcommand{\ea}{\end{aligned}\]}
\newcommand{\bp}{\begin{pmatrix}}
\newcommand{\ep}{\end{pmatrix}}
\newcommand{\bsm}{\begin{smallmatrix}}
\newcommand{\esm}{\end{smallmatrix}}
\newcommand{\bv}{\begin{vmatrix}}
\newcommand{\ev}{\end{vmatrix}}
\newcommand{\bb}{\begin{bmatrix}}
\newcommand{\eb}{\end{bmatrix}}
\newcommand{\bB}{\begin{Bmatrix}}
\newcommand{\eB}{\end{Bmatrix}}
\newcommand{\bea}{\begin{enumerate}[leftmargin=*,label=\textnormal{(\alph*)}]}
\newcommand{\ber}{\begin{enumerate}[leftmargin=*,label=\textnormal{(\roman*)}]}
\newcommand{\ben}{\begin{enumerate}[leftmargin=*,label=\textnormal{(\arabic*)}]}
\newcommand{\ee}{\end{enumerate}}
\renewcommand{\vec}{\bm}
\newcommand{\partitions}{\mathscr{P}}
\newcommand{\QtoP}{\Q^{\partitions}}
\newcommand{\sltwoz}{\mathrm{SL}_2(\Z)}
\renewcommand{\=}{\: =\: }
\newcommand{\defis}{\: :=\: }
\newcommand{\+}{\,+\,}
\newcommand{\meno}{\,-\,}
\newcommand{\At}{A}
\newcommand{\Bt}{B}
\newcommand{\As}{\mathcal{A}}
\newcommand{\Bs}{\mathcal{B}}
\newcommand{\ps}{l}
\newcommand{\Qu}{\Q\llbracket\vec{u}\rrbracket}
\newcommand{\Quzeta}{\Q\left\llbracket\vec{u},\zeta,\zeta^{-1}\right\rrbracket}
\newcommand{\wt}{\mathrm{wt}\,}
\newcommand{\mob}{\mu(\alpha,\one)}
\newcommand{\one}{\mathbbm{1}_{\ps}}
\newcommand{\bigconv}{\mathop{\hspace{-0.0pt}\scalebox{3}{\raisebox{-0.4ex}{$\ast$}}}}
\newcommand{\pdv}[2]{\frac{\partial #1}{\partial #2}}
\newcommand{\abcd}{\left(\begin{smallmatrix} a & b \\ c & d \end{smallmatrix}\right)}
\newcommand{\coef}[1]{\operatorname{coeff}_{\left[#1\right]}}
\definecolor{darkgreen}{rgb}{0.0, 0.4, 0.26}
\renewcommand{\boxed}[1]{\text{\fboxsep=.2em\fbox{\m@th$\displaystyle#1$}}}
\address{Department of Mathematics and Computer Science\\Division of Mathematics\\University of Cologne\\ Weyertal 86-90 \\ 50931 Cologne \\Germany}
\email{kbringma@math.uni-koeln.de}
\email{j.w.ittersum@uni-koeln.de}
\email{jonaskaszian@gmail.com}
\begin{document}
	\begin{abstract}
		We study certain algebras of theta-like functions on partitions, for which the corresponding generating functions give rise to theta functions, quasi-Jacobi forms, Appell--Lerch sums, and false theta functions.
	\end{abstract}
\subjclass[2020] {
11F27, 
11F37, 
11F50, 
11P82
}
\keywords{Appell--Lerch functions, false theta functions, functions on partitions, $q$-bracket, (quasi-)Jacobi forms, (quasi)modular forms}
\maketitle

\section{Introduction and statement of results}
Let $\partitions$ be the set of all partitions of integers and for $\lambda \in \partitions$ denote by $|\lambda|$ the size of the partition~$\lambda$. For $f:\partitions\to \Q$, the \emph{$q$-bracket} of~$f$ is given by
\begin{equation}\label{eq:qbracket}
	\langle f \rangle_q \defis \frac{\sum_{\lambda \in \partitions} f(\lambda) q^{|\lambda|}}{\sum_{\lambda \in \partitions} q^{|\lambda|}} \in  \Q\llbracket q\rrbracket.
\end{equation}

For many classes of functions, the $q$-bracket is a quasimodular form 
(see, e.g. \cite{BO00, CMZ16, vISymmetric, vITaylor, Zag16}). The connection to modular forms is not too surprising, since the denominator of the $q$-bracket equals $q^{\frac{1}{24}}\eta(\t)^{-1}$ with $\eta(\t):= q^{\frac{1}{24}} \prod_{n\geq 1} (1-q^n)$ the {\it Dedekind eta function} (with $q:=e^{2\pi i \t}$). Notably, in several of these cases these ``classes of functions'' are algebras, which is surprising since the $q$-bracket is linear, but not an algebra homomorphism. This fact is crucial in many applications in enumerative geometry \cite{CMSZ19, CMZ16, Eng17, EO01, HIL19, Och01, IS22}. Interpreting the $q$-bracket as the expectation value of some partition invariant~$f$ with respect to the normalized probability distribution $\nu\mapsto \frac{q^{|\nu|}}{\sum_{\lambda\in \partitions} q^{|\lambda|}}$, by this property all the moments of~$f$ are modular objects. In short, the theory of $q$-brackets forms a bridge between enumerative geometry, statistical properties of partitions and number theory. 

To be more specific, two examples of such algebras are the algebra of so-called symmetric functions $\mathcal{S}$ and the algebra of shifted symmetric functions $\Lambda^*$. These algebras are generated by the \emph{symmetric functions}~$S_k$ and \emph{shifted symmetric functions}~$Q_k$, respectively, for $\lambda \in \partitions$ given by ($\lambda_j$ is the $j$-th part of $\lambda$ and $B_k$ the $k$-th Bernoulli number)
\begin{align*}
	S_k(\lambda) &\defis -\frac{B_k}{2k}\+\sum_{j\ge1}  \lambda_j^{k-1} \qquad \text{($k\geq 2$ even)}\\
		Q_k(\lambda) &\defis -\left(1-2^{1-k}\right)\frac{B_k}{k} \+ \sum_{j\ge1} \left(\left(\lambda_j-j+\frac{1}{2}\right)^{k-1}-\left(-j+\frac{1}{2}\right)^{k-1}\right) \qquad (k\geq 2),
\end{align*}
i.e., $\mathcal{S}=\Q[S_2,S_4,S_6,\ldots]$ and $\Lambda^*=\Q[Q_2,Q_3,Q_4,\ldots]$. These algebras are graded by $\wt(S_k)=k$ and $\wt(Q_k)=k$. For all homogeneous $f\in \mathcal{S}$ or $f\in \Lambda^*$ the $q$-bracket~$\langle f \rangle_q$ is a quasimodular form \cite{BO00, vISymmetric}. In fact, every function $f\in \QtoP$ such that all moments~$\langle f^n\rangle_q$ are quasimodular for $\sltwoz$ seems to be an element of either of these two algebras.

Motivated by the study of invariants of flat surfaces, in \cite{CMZ16}, extensions of the above results for $\Lambda^*$ were found, where the $q$-brackets are Eichler integrals of modular forms (see also \cite{BOW20}). In this paper, we aim to get a better understanding of further modularity properties such $q$-brackets may admit. In particular, we introduce an algebra $\mathcal{A}$ of theta-like functions on partitions and explain how several types of modularity arise if such functions are paired with elements of $\mathcal{S}$ and their generalizations. More concretely, let, for $N\in\N$,
\begin{equation}\label{eq:fa} 
	t_{(N)}(\zeta;\lambda) 
	\defis 1 \+ \sum_{m\ge1} (\zeta^m+\zeta^{-m})\,\delta_{r_m(\lambda) \geq Nm}.
\end{equation}
Here, $\delta_X:=1$ if a statement $X$ is true and $0$ otherwise, and $r_m(\lambda)$ denotes the multiplicity of parts of size~$m$ in~$\lambda$. The probability distribution of~$r_m(\lambda)$ was determined in~\cite{Fri93}. In particular, as $|\lambda|\to \infty$,
\[ 
	P\!\left(r_m(\lambda)\geq Nm\right) \sim 
	\begin{cases}
		1 & \text{if }m=o\!\left(|\lambda|^\frac14\right),\\
		0 & \text{if }|\lambda|=o\!\left(m^4\right).
	\end{cases}
\]
We let $\mathcal{A}$ be the algebra generated by the functions $\lambda\mapsto t_{(N)}(\zeta;\lambda)$ for $N\in \N$ and $[\As]_{\zeta_\ell=1}$ the algebra generated by the functions 
\[\lambda \mapsto t_{(N)}(1;\lambda) = 1 \+ 2\sum_{m\ge1}\delta_{r_m(\lambda) \geq Nm}. \]
Our first theorem states that $\As$ gives rise to an algebra of theta-like functions on partitions.
\begin{thm}\label{thm:1}
	The vector space $\As$ satisfies the following properties:
\begin{enumerate}[leftmargin=*,label=\normalfont(\arabic*)]
	\item[\upshape{(1)}] If $f \in[\As]_{\zeta_\ell=1}$, then $\langle f \rangle_q$ is a modular theta function of some level. 
	\item[\upshape{(2)}] If $f \in\As$, then $\langle f \rangle_q$ is a Jacobi theta function.
	\end{enumerate}
\end{thm}
\begin{rmk}\itshape
In particular, by {\rm(1)}, for $f\in[\Phi(\As)]_{\zeta_\ell=1}$, the moments $\langle f^n\rangle_q$ are modular forms for all $n\in \N$. This contrasts the situation for $\Lambda^*$ and $\mathcal{S}$ for which the vector subspace of elements with a modular $q$-bracket is not a subalgebra (see~\cite{vIWhen}). 
 \end{rmk}

Moreover, we consider the algebra $\Bs$, generated by
\begin{equation*}
	s(\varrho;\lambda) := \frac{\varrho}{1-\varrho}+\frac{1}{2}\+ \sum_{m\ge1} \left(  \varrho^m \meno \varrho^{-m} \right)r_m(\lambda).
\end{equation*}
 %
Note that $s$ is a generating function for the $S_k$, i.e., 
\begin{equation}\label{eq:sgenser} 
	s(e^z;\lambda) \= -\frac{1}{z}+2\sum_{k\geq 2 \text{ even}} S_{k}(\lambda) \frac{z^{k-1}}{(k-1)!}.
\end{equation}

The elements of $\Bs$ give rise to Jacobi forms. Moreover, also products of elements of $\As$ and $\Bs$ give rise to Jacobi forms. 
\begin{thm}\label{thm:2} 
	If $f\in \As$ and $g\in \Bs$, then $\langle fg\rangle_q$ is a meromorphic quasi-Jacobi form.
\end{thm}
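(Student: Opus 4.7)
The plan is to reduce to generators and then compute $\langle f g\rangle_{q}$ explicitly by detaching the forced parts of $\lambda$ singled out by the indicator in $t_{(N)}$. By Theorem~\ref{thm:1}(1) the algebra $\Phi(\As)$ is generated by the functions $t_{(N)}(\zeta;\cdot)$, while $\Sym(\Phi(\Bs))$ is by definition generated by the $s(\varrho;\cdot)$, so by bilinearity it suffices to prove the statement for $f=t_{(N)}(\zeta;\cdot)$ and $g=\prod_{i=1}^{k} s(\varrho_{i};\cdot)$. Splitting $t_{(N)}(\zeta;\lambda)=1+\sum_{m\ge1}(\zeta^{m}+\zeta^{-m})\delta_{r_{m}(\lambda)\ge Nm}$ gives $\langle t_{(N)}g\rangle_{q}=\langle g\rangle_{q}+R(\t,\zeta,\vec{\varrho})$, where the first summand is a meromorphic quasi-Jacobi form by Theorem~\ref{thm:1}(3).

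For the remainder $R$, every partition $\lambda$ with $r_{m}(\lambda)\ge Nm$ decomposes uniquely as $\lambda=(m^{Nm})\sqcup\mu$ with $\mu$ an arbitrary partition, giving $|\lambda|=Nm^{2}+|\mu|$. Since $s(\varrho;\cdot)$ is linear in $r_{m}(\cdot)$, under this decomposition
\[s(\varrho_{i};\lambda)=s(\varrho_{i};\mu)+Nm(\varrho_{i}^{m}-\varrho_{i}^{-m}).\]
Expanding the product over $i$ and interchanging summations,
\[R \= \sum_{S\subseteq\{1,\ldots,k\}} N^{|S|}\,\Theta_{S,N}(\zeta,\vec{\varrho};q)\;\Bigl\langle\prod_{i\notin S} s(\varrho_{i};\cdot)\Bigr\rangle_{q},\]
where
\[\Theta_{S,N}(\zeta,\vec{\varrho};q)\defis \sum_{m\ge1} m^{|S|}(\zeta^{m}+\zeta^{-m})\prod_{i\in S}(\varrho_{i}^{m}-\varrho_{i}^{-m})\,q^{Nm^{2}}.\]

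It remains to show that each $\Theta_{S,N}$ is a quasi-Jacobi form. Its summand is invariant under $m\mapsto-m$, since $m^{|S|}$ and $\prod_{i\in S}(\varrho_{i}^{m}-\varrho_{i}^{-m})$ each pick up the sign $(-1)^{|S|}$, whose product is $+1$. Hence, up to an explicit constant arising at $m=0$, one has $2\Theta_{S,N}=\sum_{m\in\Z}(\cdots)$. Expanding $\prod_{i\in S}(\varrho_{i}^{m}-\varrho_{i}^{-m})=\sum_{T\subseteq S}(-1)^{|T|}c_{T}^{m}$ with $c_{T}=\prod_{i\in S\setminus T}\varrho_{i}\cdot\prod_{i\in T}\varrho_{i}^{-1}$, each resulting inner sum takes the form
\[\sum_{m\in\Z} m^{|S|}w^{m}q^{Nm^{2}}\=(w\partial_{w})^{|S|}\theta_{N}(w),\]
with $\theta_{N}(w)\defis\sum_{m\in\Z}w^{m}q^{Nm^{2}}$ a Jacobi theta function of index $N$; such elliptic derivatives are quasi-Jacobi forms. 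Combining with Theorem~\ref{thm:1}(3) applied to the complementary factor, $R$ is a finite $\Q$-linear combination of products of a quasi-Jacobi form with a meromorphic quasi-Jacobi form, hence itself meromorphic quasi-Jacobi, completing the argument.

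The main obstacle is the identification of $\Theta_{S,N}$ as a quasi-Jacobi form: a priori it is a one-sided sum over $m\ge1$ weighted by $m^{|S|}$, which could be a partial-theta-like object falling outside the classical Jacobi class. The parity symmetry $m\mapsto-m$ above is the crucial observation which completes the one-sided sum to a full theta sum and exhibits $\Theta_{S,N}$ as an iterated elliptic derivative of $\theta_{N}$.
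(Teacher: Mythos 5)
Your computation for the case $f=t_{(N)}(\zeta;\cdot)$ is correct, and it is in substance the same identity the paper obtains: detaching the $Nm$ forced parts of size $m$ is a hands-on version of the connected-bracket evaluation (\cref{lem:connected}(2) together with \eqref{eq:ubrackoffa}), and for a single $t$-factor your formula for $R$ agrees with the paper's display in the proof of \Cref{thm:2} after passing to connected brackets. Your parity observation, completing the one-sided sum $\sum_{m\geq 1}$ to $\frac12\sum_{m\in\Z}$ and recognizing $(w\partial_w)^{|S|}\theta_N$ as a quasi-Jacobi form via \Cref{prop:der}, is also correct and in fact makes explicit a step the paper leaves implicit.

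However, there is a genuine gap in your reduction step. The algebra $\Phi(\As)$ is \emph{generated} by the $t_{(N)}(\zeta_j;\cdot)$, so bilinearity only reduces the claim to $f$ a \emph{monomial} $\prod_{j=1}^{\ps} t_{(N_j)}(\zeta_j;\cdot)$ and $g=\prod_i s(\varrho_i;\cdot)$; it does not reduce to a single generator $t_{(N)}$, and you cannot absorb the extra $t$-factors into $g$ since $t_{(N_2)}g\notin\Sym(\Phi(\Bs))$. Your detaching argument as written only treats $\ps=1$. For $\ps\geq 2$ the indicator $\prod_j\delta_{r_{m_j}(\lambda)\geq N_jm_j}$ forces an inclusion--exclusion over coincidences among the part sizes $m_j$ (when several $m_j$ coincide the condition becomes $r_m\geq (\max_j N_j)m$, producing the quantity $M=\sum_{A\in\alpha}\max A$), and the resulting multi-dimensional sums must be split into diagonal and off-diagonal pieces before the parity/theta-derivative argument applies. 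This bookkeeping is exactly what the paper's connected $q$-bracket machinery (the M\"obius sum over set partitions in \Cref{lem:connected} and \eqref{eq:connectedbracket}) handles, and it is the part of the theorem your proposal omits; the ideas you use do extend, but the extension needs to be carried out for the proof to cover all $f\in\Phi(\As)$.
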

\begin{rmk}\itshape
For comparison, if $f\in \Lambda^*$, $g\in \mathcal{S}$, then $\langle f\rangle_q$ and $\langle g\rangle_q$ are quasimodular, but $\langle f g\rangle_q$ is not necessarily a quasimodular form. The latter $q$-bracket lies in a space of multiple Eisenstein series, also called the space of $q$-analogues of multiple zeta values (see~\cite{BI22}).
\end{rmk}
  \begin{cor} 
Let $f$ be in the algebra generated by the $S_k$ for $k\geq 0$ even and the functions 
 \[
\lambda\mapsto 1 \+ 2\sum_{m\ge1}m^\ell\delta_{r_m(\lambda) \geq Nm}.
\]
for $N\in \N$ and $\ell\geq 0$ even. Then the $q$-bracket $\langle f\rangle_q$ is a quasimodular form.
 \end{cor}
Finally, consider 
\begin{equation*}
	s^*(\varrho,\xi;\lambda) = \frac{1}{1-\varrho}\+ \frac{\xi}{1-\xi}\+\sum_{m\ge1}\sum_{r=1}^{r_m(\lambda)} (\varrho^m\xi^r \meno \varrho^{-m}\xi^{-r}).
\end{equation*}
Similarly, taking the coefficients of~$s^*$ around $w=\mathfrak{z}=0$ with $\varrho=e^{2\pi i \mathfrak{z}}$ and $\xi=e^{2\pi i w}$ generates an algebra~$\mathcal{T}$ containing $\mathcal{S}$, but which is not a graded algebra.  
We refer the interested reader to Subsection \ref{sec:algebras} or~\cite{vISymmetric} for more details about the algebras $\mathcal{S}$ and $\mathcal{T}$.

Quasi-Jacobi forms admit non-holomorphic completions which transform as Jacobi forms. We note that by taking $q$-brackets with $s^*$ yields forms for which the completion is more involved than for quasi-Jacobi forms; see \Cref{sec:falsemock} for a more precise statement.
\begin{thm}\label{thm:falsemock}
For $N\in\N$, the bracket~$\langle t_{(N)} s^*\rangle_q$ can be expressed as a linear combination of a Jacobi theta function, a false theta function, and an Appell--Lerch sum.
\end{thm}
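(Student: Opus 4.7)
The plan is to compute the bracket directly by parameterizing partitions $\lambda$ via their multiplicity vector $(r_m(\lambda))_{m\geq 1}$, using the factorization $\sum_\lambda q^{|\lambda|}=\prod_k(1-q^k)^{-1}$. Decompose $t_{(N)} = 1 + \sum_{m\geq 1} f_m$ with $f_m(\lambda):=(\zeta^m+\zeta^{-m})\delta_{r_m(\lambda)\geq Nm}$, and $\widehat{s} = C_0 + \sum_{m\geq 1} g_m$ with $C_0:=\tfrac{1}{1-\varrho}+\tfrac{\xi}{1-\xi}$ and $g_m(\lambda):=\sum_{r=1}^{r_m(\lambda)}(\varrho^m\xi^r-\varrho^{-m}\xi^{-r})$. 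Since $f_m$ and $g_{m'}$ depend only on disjoint multiplicities, one has $\langle f_m g_{m'}\rangle_q=\langle f_m\rangle_q\langle g_{m'}\rangle_q$ for $m\neq m'$; expanding the product $t_{(N)}\widehat{s}$ then yields the key identity
\[
\langle t_{(N)}\widehat{s}\rangle_q = \theta_N(\zeta)\,\langle\widehat{s}\rangle_q + \sum_{m\geq 1}\bigl(\langle f_m g_m\rangle_q - \langle f_m\rangle_q\langle g_m\rangle_q\bigr),
\]
where $\theta_N(\zeta):=\sum_{m\in\Z}\zeta^m q^{Nm^2}$ is a Jacobi theta function.

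I would then compute each diagonal correction explicitly. Using $\sum_{r_m\geq Nm}q^{mr_m}=q^{Nm^2}/(1-q^m)$ and interchanging the order of summation, split the inner sum over $r\in[1,r_m]$ at $r=Nm$: the interior range $r\leq Nm$ paired with $r_m\geq Nm$ produces factors $q^{Nm^2}\sum_{s=1}^{Nm}\xi^{\pm s}$, whereas the boundary range $r>Nm$ paired with $r_m\geq r$ produces factors $\xi^{\pm(Nm+1)}q^{Nm^2+m}/(1-\xi^{\pm 1}q^m)$. After summing the finite geometric series $\sum_{s=1}^{Nm}\xi^s=(\xi-\xi^{Nm+1})/(1-\xi)$ and applying the elementary identity $(1-q^m)/(1-\xi q^m) = 1-(1-\xi)q^m/(1-\xi q^m)$, each correction summand reduces to ``pure theta'' terms of the shape $x^m q^{Nm^2}$ and ``Appell--Lerch'' terms of the shape $x^m q^{Nm^2+m}/(1-\xi^{\pm 1}q^m)$, for various shifted arguments $x\in\{\zeta^{\pm 1}\varrho^{\pm 1}\xi^{\pm N}\}$.

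Summing over $m\geq 1$, the pure-theta summands combine into partial (half-)theta series $\sum_{m\geq 1}x^m q^{Nm^2}$, which are false theta functions; the Appell--Lerch summands combine into Appell--Lerch sums at level $N$. Together with $\theta_N(\zeta)\langle\widehat{s}\rangle_q$---a Jacobi theta function multiplied by the meromorphic Jacobi form $\langle\widehat{s}\rangle_q$---this exhibits $\langle t_{(N)}\widehat{s}\rangle_q$ as a linear combination, with coefficients rational in $\zeta,\varrho,\xi$, of a Jacobi theta function, a false theta function, and an Appell--Lerch sum. The main technical obstacle is the bookkeeping: correctly tracking the many shifted arguments $\zeta^{\pm 1}\varrho^{\pm 1}\xi^{\pm N}$ across the half-theta and Appell--Lerch sums, and verifying that the $\xi$-poles from $C_0$ cancel cleanly against the interior contributions so that the final decomposition is unambiguous.
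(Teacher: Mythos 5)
Your proposal is correct and follows essentially the same route as the paper: your key identity is exactly the splitting of $\langle t_{(N)}\widehat{s}\rangle_q$ into the disconnected piece $\langle t_{(N)}\rangle_q\langle\widehat{s}\rangle_q$ plus diagonal connected corrections (the content of \Cref{lem:connected}, which localizes the connected bracket to equal part sizes), and your split of the $r$-sum at $r=Nm$ with geometric series reproduces the computation leading to \eqref{tsbracket}. The ``bookkeeping'' you defer is precisely the paper's remaining step: pairing the $m$ and $-m$ contributions via the $\sgn$ manipulations so that your half-theta and unilateral Appell-type pieces become the bilateral theta $\Theta$, the false theta $T$, and $f_N(z,w;\tau)=\xi^{-N}A_{2N}(w,z-N\tau;\tau)$, whose completed transformation laws are what give the statement its precise modular content.
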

In \Cref{sec:pre} we recall the relevant background, such as the connected $q$-bracket, quasi-Jacobi forms, Appell functions, and false theta functions. The proof of the first two theorems can be found in~\Cref{sec:12} and the proof of \Cref{thm:falsemock} in \Cref{sec:falsemock}. 

\section*{Acknowledgements}
The authors were supported by the SFB/TRR 191 “Symplectic Structure in Geometry, Algebra and Dynamics”, funded by the DFG (Projektnummer 281071066 TRR 191). Part of this paper was done while the last two authors were at the Max Planck Institute for Mathematics. The authors thank Walter Bridges for helpful conversations.

\section{Preliminaries}\label{sec:pre}
\subsection{Partitions and the connected $q$-bracket}
Each partition $\lambda \in \partitions$ is represented as $\lambda= (\lambda_1,\lambda_2,\ldots)$ with ${\lambda_j\in \N_0}$, $\lambda_1\geq \lambda_2\geq \cdots$ and $\lambda_j=0$ for all but finitely many $j$, or, equivalently, as a finite multiset where the part $m\in \lambda$ has multiplicity $r_m(\lambda)$. We write $|\lambda|=\sum_j\lambda_j = \sum_{m\geq 1} m r_m(\lambda)$ for the size of $\lambda$ and $\ell(\lambda):=|\{j: \lambda_j\neq 0\}|=\sum_{m\geq 1} r_m(\lambda)$ for the length of~$\lambda$. 
Following \cite{vISymmetric}, we refine the $q$-bracket (see \eqref{eq:qbracket}) by the \emph{$\vec{u}$-bracket}, which is the isomorphism of vector spaces $\QtoP \to \Qu$, where $\vec{u}=(u_1,u_2,\ldots)$, given by
\begin{align}
f\mapsto 	\langle f \rangle_{\vec{u}} \defis \frac{\sum_{\lambda \in \partitions} f(\lambda) u_\lambda}{\sum_{\lambda \in \partitions} u_\lambda} \qquad (u_\lambda = u_{\lambda_1} u_{\lambda_2}\cdots ).
\end{align}
Note that for $f\in \QtoP$, one has $\langle f \rangle_q = [\langle f \rangle_{\vec{u}}]_{u_j=q^j}\mspace{1mu}$.
A partition is {\it strict} if it has no repeated parts, i.e., if $r_m(\lambda)\leq 1$ for all $m\in \N$. Define the {\it M\"obius function on partitions} by
\begin{equation}\label{eq:moebius}
	\mu(\lambda) := 
	\begin{cases} 
		(-1)^{\ell(\lambda)} & \text{if $\lambda$ is strict},\\
		0 & \text{otherwise}. 
	\end{cases}
\end{equation}
Then, we have
\[ \langle f\rangle_{\vec{u}} = \sum_{\lambda \in \partitions} f(\lambda) u_\lambda\sum_{\lambda \in \partitions} \mu(\lambda) u_\lambda.\]

If $f(\lambda)=g(r_m(\lambda))$ for some $m$, then \cite[Proposition~3.1.4]{vISymmetric} explains how to compute its $\vec{u}$-bracket. Let the \emph{discrete derivative}~$\partial$ of $g:\N\to \Q$ be defined by $\partial g(n) := g(n)-g(n-1)$ for $n\geq 2$ and $\partial g(1):=g(1)$. 
\begin{lem}\label{lem:314}
If $f(\lambda)=g(r_m(\lambda))$ with $m\in \N$, then
\[ \langle f\rangle_{\vec{u}} = \sum_{r\ge1} \partial g(r) u_m^r.\]
\end{lem}

We define the connected~$q$-bracket \cite[p.~55--57]{CMZ16}, which naturally arises in enumerative geometry from counting connected coverings. From a combinatorial point of view, often the connected~$q$-bracket is easier to compute than the usual~$q$-bracket.
For a set $X$, write $\pi(X)$ for the set of all set partitions of $X$. For $\ps\in\N$, let $[\ps]:=\{1,\ldots,\ps\}$. Write $\Pi(\ps)$ for the set of all set partitions of $[\ps]$. For example, we write $\one=\{[\ps]\}$ for the partition consisting of one element. For $A\subset [\ps]$ we denote $f_A = \prod_{a\in A} f_a$. The \emph{connected~$q$-bracket} is defined as the multilinear map $\langle\cdot\rangle_q:(\QtoP)^{\otimes \ps}\to \Q$ extending the~$q$-bracket such that for $f_1,\ldots, f_\ps\in \QtoP$
\begin{align*} 
	\langle f_1 \otimes \cdots \otimes f_\ps\rangle_q \= \sum_{\alpha \in \Pi(\ps)} \mu(\alpha,\one)\prod_{A\in \alpha}\langle f_A\rangle_q. 
\end{align*}
Here, for $\alpha,\beta\in \Pi(\ps)$ the \emph{M\"obius function} is given by
\begin{equation*}
\mu(\alpha,\beta):\=\prod_{B\in\beta}(-1)^{|\alpha_B|+1}(|\alpha_B|-1)!,
\end{equation*}
 where~$\alpha_B$ for $B\subset [\ps]$ is the partition on~$B$ induced by~$\alpha$. We write $\alpha\leq \beta$ if for all~$A\in \alpha$ there exists a~$B\in \beta$ such that~$A\subseteq B$.
By invoking the M\"obius inversion formula one finds
\begin{align}\label{eq:connectedbracket}
	\prod_{B\in \beta}\langle \otimes_{b\in B}f_b \rangle_q &\= \sum_{\alpha\leq\beta} \mu(\alpha,\beta)\prod_{A\in \alpha}\langle f_A\rangle_q , \qquad
	\prod_{A\in \alpha}\langle f_A\rangle_q\= \sum_{\beta\leq\alpha} \prod_{B\in \beta}\langle \otimes_{b\in B}f_b \rangle_q .
\end{align}
Similarly, we let
\begin{align} \label{eq:conubrack}
	\langle f_1 \otimes \cdots \otimes f_\ps\rangle_{\vec{u}} :\= \sum_{\alpha \in \Pi(\ps)} \mu(\alpha,\one)\prod_{A\in \alpha}\langle f_A\rangle_{\vec{u}}
\end{align}
and have a M\"obius inversion formula
\begin{align}\label{eq:mobiusubrack}
	\prod_{A\in \alpha}\langle f_A\rangle_{\vec{u}}\= \sum_{\beta\leq\alpha} \prod_{B\in \beta}\langle \otimes_{b\in B}f_b \rangle_{\vec{u}} .
\end{align}

\subsection{The algebras $\mathcal{S}, \mathcal{T}$, and $\Lambda^*$}\label{sec:algebras}\hspace{0cm}
In the introduction, the algebras $\Lambda^*$ and $\mathcal{S}$ are defined and it is explained that the $q$-brackets of their elements are quasimodular. Besides this property which these algebras share, there are many differences. For example, for $f\in \Lambda^*$ of weight $k$, one has $f(\lambda')=(-1)^k f(\lambda)$, where $\lambda'$ denotes the conjugate of the partition $\lambda$, whereas there is no easy relation by $f(\lambda')$ and $f(\lambda)$ for $f\in \mathcal{S}$. 

The Taylor coefficients of $s^*$ give rise to an extension of $\mathcal{S}$. More concretely, let
\[
T_{k,\ell}(\lambda) \defis -\frac{B_{k+\ell}}{2(k+\ell)}\delta_{k=0\,\text{or}\,\ell=1}\+\sum_{m\ge1}\sum_{r=1}^{r_m(\lambda)} m^{k} r_m(\lambda)^{\ell-1} \qquad (k\geq 0, \ell\geq 1, k+\ell \text{ even}).
\]
Then, the $T_{k,\ell}$ are the Taylor coefficients of $s^*$. In particular, note that $T_{k-1,1}=S_{k}$. The algebra~$\mathcal{T}$ generated by the $T_{k,\ell}$ also has the property that the $q$-bracket of $f\in \mathcal{T}$ is quasimodular. Moreover, it has the interesting property that $\langle \mathcal{T}\rangle_{\vec{u}}$ is also an algebra with the multiplication of power series in $\Qu$. In contrast, $\langle \Lambda^*\rangle_{\vec{u}}$ and $\langle \mathcal{S}\rangle_{\vec{u}}$ are not algebras. 
Further variations of these algebras are known for which the elements are quasimodular under a modification of the $q$-bracket \cite{Eng17, EO06, EOP08}, e.g., for the measure $\lambda \mapsto \frac{\mu(\nu) q^{|\nu|}}{\sum_{\lambda \in \partitions} \mu(\lambda) q^{|\lambda|}}$.

\subsection{Jacobi forms} Define $\zeta=e^{2\pi i z}$, $q=e^{2\pi i \t}$ and for a complex variable $w$, write $w=w_1+iw_2$ for $w_1,w_2\in \R$ throughout. The \emph{Jacobi theta function} is given by
\begin{equation}\label{eq:Jacobitheta} 
	\vartheta(z;\tau):=\sum_{n\in \Z+\frac12} e^{2\pi in\left(z+\frac12\right)} q^{\frac{n^2}2} .
\end{equation}
The Jacobi theta function is a Jacobi form of weight~$\frac{1}{2}$ and index~$\frac{1}{2}$, that is, for $\gamma =\abcd\in\SL_2(\Z)$ and $m,\ell\in \Z$, we have (which follows from Table~V of~\cite{Mum83})
\begin{align}
\vartheta(z+m\tau+\ell;\tau) &=(-1)^{m+\ell}\zeta^{-m}q^{-\frac{m^2}{2}}\vartheta(z;\tau),\nonumber\\
\vartheta\!\left(\frac{z}{c\tau+d};\frac{a\tau+b}{c\tau+d}\right) &= \nu_\eta^3(\gamma)\sqrt{c\tau+d} e^{\frac{\pi icz^2}{c\t+d}}\vartheta(z;\tau),\label{eq:theta}
\end{align}
where $\nu_\eta$ is the multiplier of $\eta$, that is, for $\gamma =\abcd \in \SL_2(\Z)$, we have
\[
\eta\left(\frac{a\tau+b}{c\tau+d}\right) = \nu_\eta(\gamma) \sqrt{c\tau+d}\; \eta(\tau).
\]
Explicitly $\nu_\eta$ is given by (see~\cite{Kn})
\begin{equation}\label{E:MultEt}
	\nu_\eta
	\begin{pmatrix}
		a & b\\
		c & d
	\end{pmatrix}
	=
	\begin{cases}
		\left(\frac d{|c|}\right)e^{\frac{\pi i}{12}\left((a+d)c-bd\left(c^2-1\right)-3c\right)} & \text{if $c$ is odd},\\
		\left(\frac cd\right)e^{\frac{\pi i}{12}\left(ac\left(1-d^2\right)+d(b-c+3)-3\right)} & \text{if $c$ is even},
	\end{cases}
\end{equation}
where $(\frac{\cdot}{\cdot})$ denotes the Kronecker symbol.
Write
\[ \Theta(z;\tau):=\sum_{n\in \Z} \zeta^n q^\frac{n^2}2
= \zeta^{\frac{1}{2}}q^{\frac{1}{8}}\sum_{n\in\Z+\frac{1}{2}} \zeta^{n} q^{\frac{n^2+n}{2}} =  \zeta^{\frac{1}{2}}q^{\frac{1}{8}} \vartheta\left(z-\frac12+\frac\t2;\t\right).
\]
Then, for $\gamma =\abcd\in\Gamma(2)$ and $m,\ell\in \Z$, we have \cite[p.~2 and Theorem 7.1]{Mum83}
\begin{align*}
\Theta(z+m\tau+\ell;\tau) &=\zeta^{-m}q^{-m^2}\Theta(z;\tau),\\
\Theta\!\left(\frac{z}{c\tau+d};\frac{a\tau+b}{c\tau+d}\right) &= \left(\frac{-2c}{d}\right) \varepsilon_d^{-1} \sqrt{c\tau+d}\; e^{\frac{\pi icz^2}{c\t+d}}\Theta(z;\tau),
\end{align*}
where, for $d$ odd, we let $\varepsilon_d:=1$ if $d\equiv1\Pmod4$ and $\varepsilon_d:=i$ if $d\equiv3\Pmod4$.

Another Jacobi form that we encounter in this paper is the {\it Weierstrass $\wp$-function} 
\[
	\wp(z;\tau) := \frac{1}{z^2} + \sum_{\substack{\omega \in \Z\tau+\Z \\ \omega\neq 0}} \left(\frac{1}{(z-\omega)^2}-\frac{1}{\omega^2}\right).
\]
We have for $m$, $n\in\Z$ and $
\begin{psmallmatrix}
	a & b\\
	c & d
\end{psmallmatrix}
\in\SL_2(\Z)$ \cite{EZ85}
\[ \wp(z+m\tau+n\tau;\tau)=\wp(z;\tau), \qquad \wp\!\left(\frac{z}{c\tau+d};\frac{a\tau+b}{c\tau +d}\right) = (c\tau+d)^{-2} e^{-\frac{2\pi icz^2}{c\t+d}}\wp(z;\tau).\]

For $\gamma=\abcd$ ($\in\SL_2(\Z)$ if $k\in\Z$ and $\in\Gamma_0(4)$ if $k\in\Z+\frac12$) we define \emph{the automorphy factor}~$j(\gamma,\tau)$ by
\begin{equation}\label{eq:aut}
	j(\gamma,\tau) :=
	\begin{cases}
		\sqrt{c\tau+d} & \text{if $k\in\Z$},\\
		\left(\frac cd\right) \varepsilon_d^{-2k} \sqrt{c\tau+d} & \text{if $k\in\Z+\frac12$}.
	\end{cases}
\end{equation}
The functions obtained as $q$-brackets in this paper are Jacobi forms, defined below, possibly after adding non-holomorphic parts as we discuss in the sections below. 
\begin{definition}{~}
Let $k$, $M\in\frac12\Z$. Let $\Gamma$ be a congruence subgroup of $\sltwoz$ if $k\in\Z$, and of $\Gamma_0(4)$ if $k\in \Z +\frac{1}{2}$. A \emph{polar Jacobi form} of weight $k$, index $M$, and character $\chi$ on $\Gamma$ is a function $\phi:\C\times \H\to \C$ that is holomorphic in $\tau$, meromorphic in $z$, and that satisfies:
\begin{enumerate}[leftmargin=*]
\item For $m,\ell\in \Z$ we have that
\[ \phi(z+m\tau+\ell;\tau) = (-1)^{M(\ell+m)} e^{-2\pi iM\left(m^2\tau+2m z\right)}\phi(z;\tau).\]
\item For $\gamma=\abcd\in \Gamma$ we have that
\[ \phi\!\left(\frac{z}{c\tau+d};\frac{a\tau+b}{c\tau+d}\right) = \chi(d)j(\gamma,\tau)^{2k} e^{\frac{2\pi icMz^2}{c\tau+d}}\phi(z;\tau).\]
\item All poles $z_0$ of $z\mapsto \phi(z;\t)$ satisfy $z_0\in \Q\tau+\Q$. Moreover, if $z\in \Q\tau+\Q$ is not a pole of $\phi(z;\tau)$, then $\tau\mapsto \phi(z;\tau)$ is a weakly holomorphic modular form (of some level). 
\end{enumerate}
\end{definition}
Both $\vartheta$ and $\wp$ are polar Jacobi forms. It was discovered by Zwegers~\cite{Zw} that meromorphic Jacobi forms for which poles only occur at torsion points $z\in \Q\tau+\Q$ have a modified theta expansion related to mock modular forms. 
The notion of polar Jacobi forms is closely related to the so-called strictly meromorphic Jacobi forms in~\cite{vITaylor}.

We mention two other examples of Jacobi forms we encounter in this work. First of all, consider Zagier's polar Jacobi form (see Section 3 of \cite{Zag91}) $(q=e^{2\pi i \tau}, \zeta=e^{2\pi i z}, \xi=e^{2\pi i w})$ 
\begin{align}\label{eq:F} 
	F(z,w;\tau):=& \sum_{n\ge0} \frac{\xi^{-n}}{q^{-n}\zeta-1}-\sum_{m\ge0} \frac{\zeta^m\xi}{q^{-m}-\xi} \\
	=&\frac{\zeta\xi-1}{(\zeta-1)(\xi-1)}-\sum_{m,r\geq 1} \left(\z^m\xi^r-\zeta^{-m}\xi^{-r}\right)q^{mr} \nonumber
\end{align}
defined for $z_2<\tau_2$ and $-w_2<\tau_2$. 
The following identity was known to Kronecker \cite{Kro}, \cite[pp.~309--318]{Kro2}; see also \cite[pp. 70-71]{Wei76}, \cite[Section~3]{Zag91}, \cite[equation ~(1.1)]{Mor17}
\[ 
	F(z,w;\tau) = \frac{1}{2\pi i}\frac{\vartheta'(0;\tau)\vartheta(z+w;\tau)}{\vartheta(z;\tau)\vartheta(w;\tau)},
\]
where $\vartheta'(z;\tau) := \pdv{}{z}\vartheta(z;\tau)$. Moreover, by \cite[Theorem~4.4.1]{vISymmetric} we have 
\begin{equation}\label{eq:shatbrac}\langle s^*(\z,\xi) \rangle_q = F(z,w;\tau).\end{equation}

\begin{rmk}\itshape
For $n\in\N$, define\footnote{Note that, in this remark, $z_1$ and $z_2$ should not be confused with the real and the imaginary part of $z$.}
\begin{align*}
	&F_n(z_1,\ldots,z_{2n};\t)\\ 
	&:=  (-2)^{-n}\!\sum_{\vec{m},\vec{r}\in \Z^n}\! \z_{2n+1}^{m_{n}}\z_{2n+2}^{r_n} q^{\vec{m}\cdot\vec{r}}\prod_{j=1}^n \left(\!\sgn\left(m_j-m_{j+1}-\frac{1}{2}\right)+\sgn\left(r_1+\ldots+r_j+\frac{1}{2}\right)\!\right),
\end{align*}
where we set $m_{n+1}=0$. This function is closely related to the generating function of $q$-analogues of multiple zeta values, given by 
\[
\sum_{\substack{m_1>\cdots>m_n>0 \\ r_1,\ldots,r_n>0}}\! \z_{2n+1}^{m_{n}}\z_{2n+2}^{r_n} q^{\vec{m}\cdot\vec{r}}.
\]
The appearance of $n$ sign factors seems to suggest that $F_n$ is not a Jacobi form. However, surprisingly, we have
\[F_n(z_1,\ldots,z_{2n};\t)= \prod_{j=1}^n F\!\left(z_1+z_3+\ldots+z_{2j+1},z_{2j}-z_{2j+2};\t\right).\]
\end{rmk}

Secondly, the \emph{Eisenstein--Kronecker series} $E_k$ are polar Jacobi forms of integer weight $k\geq 3$ and index $0$, given by (see \cite[Chapter~III~§2]{Wei76})
\[ E_k(z;\tau) := \sum_{c,d\in\Z}\frac{1}{(z+c\tau+d)^k}.\]

\subsection{Quasi-Jacobi forms}\label{sec:quasiJacforms} 
Extending the above definition of the Kronecker--Eisenstein series $E_k$ by the Eisenstein summation procedure
 $$
 	\sum_{c,d\in \Z} = \lim_{C\to\infty}\sum_{c=-C}^C\left(\lim_{D\to\infty}\sum_{d=-D}^D\right),
 $$
 the resulting function $E_1(z;\tau)$ and $E_2(z;\tau)$ do not satisfy the transformation properties of a Jacobi form. Note that $E_2(z;\tau)- e_2(\tau)= \wp(z;\tau)$, where $e_2(\tau):=\sum_{c,d\in \Z} \frac{1}{(c\tau+d)^2}$ with the summation is taken by the same Eisenstein summation procedure. Recall their completions (see equations~(4) and (6) in \cite[Chapter~VI~§2]{Wei76} with $u=1, v=\t, \delta=1$, and $2\pi iA=\tau-\overline{\tau}$)
\[ \widehat{E}_1(z;\tau) :=  E_1(z;\tau) + 2\pi i \frac{z_2}{\tau_2}, \qquad \widehat{E}_2(z;\tau) = E_2(z;\tau) -  \frac\pi{\tau_2},\]
which transform as Jacobi forms of weight $1$ and $2$ and index $0$, respectively. More generally, an \emph{almost Jacobi form} is a polynomial in $\frac{1}{\tau_2}$ and $\frac{z_2}{\tau_2}$ with meromorphic coefficients, which transforms as a Jacobi form:
\begin{definition} Let $k$, $M\in\frac12\Z$. Let $\Gamma$ be a congruence subgroup of $\sltwoz$ if $k\in\Z$, and of $\Gamma_0(4)$ if $k\in\Z+\frac12$. 
A \emph{polar almost Jacobi form} of weight $k$, index $M$, and character $\chi$ on $\Gamma$ is a function $\phi:\C\times \H\to \C$ satisfying:
\begin{enumerate}[leftmargin=*]
\item For $m,\ell\in \Z$ we have that
\[ \phi(z+m\tau+\ell;\tau) = (-1)^{M(\ell+m)} e^{-2\pi iM\left(m^2\tau+2m z\right)}\phi(z;\tau).\]
\item For $\gamma=\abcd\in \Gamma$ we have that
\[ \phi\!\left(\frac{z}{c\tau+d};\frac{a\tau+b}{c\tau+d}\right) = \chi(d) j(\gamma,\tau)^{2k} e^{\frac{2\pi icMz^2}{c\tau+d}}\phi(z;\tau).\]
\item We have
\[\phi(z;\tau)=\sum_{\ell,j} \phi_{\ell,j}(z;\tau) \frac{z_2^j}{\tau_2^{\ell+j}}\]
 with $\phi_{\ell,j}:\C\times \H\to \C$ meromorphic with all all poles $z_0$ of $z\mapsto \phi_{\ell,j}(z;\t)$ satisfy $z_0\in \Q\tau+\Q$. Moreover, if $z\in \Q\tau+\Q$ is not a pole of $\phi_{\ell,j}(z;\tau)$, then the function $\tau\mapsto \phi_{\ell,j}(z;\tau)$ is weakly holomorphic. 
\end{enumerate}
We say $\psi$ is a \emph{polar quasi-Jacobi form} if $\psi=\phi_{0,0}$ for some polar almost Jacobi form $\phi$. 
\end{definition}
As a first example, quasimodular forms such as $e_2$ can be seen as quasi-Jacobi forms which do not depend on the elliptic variable~$z$. Quasi-Jacobi forms naturally appear in enumerative geometry and the theory of vertex operator algebras; the definition and first study go back to these works \cite{Lib11, KM15, OP19}.

We recall a key properties of quasi-Jacobi forms. Write $J_{k,M}$ and $\widetilde{J}_{k,M}$ for the space of polar Jacobi forms and polar quasi-Jacobi forms, respectively, of weight~$k$ and index~$M$. 
\begin{prp}[{\cite[Proposition~2.18]{vITaylor}}] For $k$, $M\in\frac12\Z$
\[\widetilde{J}_{k,M} = J_{k,M}[E_1,E_2].\]
\end{prp}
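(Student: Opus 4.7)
I would prove both inclusions of the equality. The inclusion $J_{k,M}[E_1,E_2]\subseteq \widetilde{J}_{k,M}$ follows directly from the fact that $\widehat{E}_1$ and $\widehat{E}_2$ are polar almost Jacobi forms of weights $1,2$ and index $0$, and that the space of polar almost Jacobi forms is closed under products. Hence $P(\widehat{E}_1,\widehat{E}_2)$ is a polar almost Jacobi form of weight $k$ and index $M$ for every polynomial $P\in J_{*,M}[x,y]$ of total weight $k$; since the non-holomorphic corrections $2\pi i\, z_2/\tau_2$ and $-\pi/\tau_2$ contribute only terms with strictly positive power of $z_2/\tau_2$ or $1/\tau_2$, extracting the $\phi_{0,0}$-coefficient yields $P(E_1,E_2)\in \widetilde{J}_{k,M}$.

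For the reverse inclusion, I would induct on the \emph{depth} $d(\psi)$, defined as the largest value of $\ell+j$ for which $\phi_{\ell,j}\neq 0$ in the expansion $\phi=\sum_{\ell,j\geq 0}\phi_{\ell,j}(z;\tau)\, z_2^{\,j}\,\tau_2^{-\ell-j}$ of the almost Jacobi completion of $\psi=\phi_{0,0}$. If $d(\psi)=0$, then $\phi=\phi_{0,0}$ is already a polar Jacobi form and $\psi\in J_{k,M}$. For $d(\psi)>0$, the binomial expansion
\[
\widehat{E}_1^{a}\,\widehat{E}_2^{b}\;=\;\sum_{i=0}^{a}\sum_{j=0}^{b}\binom{a}{i}\binom{b}{j}(2\pi i)^i(-\pi)^j\, E_1^{a-i}E_2^{b-j}\, z_2^{\,i}\,\tau_2^{-i-j}
\]
shows that the unique top-depth contribution of $\widehat{E}_1^{a}\widehat{E}_2^{b}$ (with $a+b=d(\psi)$) is the monomial $(2\pi i)^a(-\pi)^b\,z_2^{\,a}\,\tau_2^{-a-b}$. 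Since the monomials $z_2^{\,a}\,\tau_2^{-a-b}$ are linearly independent for distinct pairs $(a,b)$ with $a+b=d(\psi)$, one can choose polar Jacobi form coefficients $f_{a,b}$ such that $P:=\sum_{a+b=d(\psi)}f_{a,b}\widehat{E}_1^{a}\widehat{E}_2^{b}$ has the same top-depth coefficients as $\phi$. Then $\phi-P$ is a polar almost Jacobi form of depth strictly less than $d(\psi)$, and applying the inductive hypothesis to its $\phi_{0,0}$-part $\psi-P(E_1,E_2)$ expresses $\psi$ as an element of $J_{k,M}[E_1,E_2]$.

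The main obstacle is to check that the top-depth coefficients $\phi_{\ell,j}$ with $\ell+j=d(\psi)$ are themselves polar Jacobi forms of the appropriate weight and of the same index $M$ and character, so that they are legitimate choices for the $f_{a,b}$. This requires separating the modular and elliptic transformation laws of $\phi$ into pieces homogeneous in the variables $z_2/\tau_2$ and $1/\tau_2$: the key observation is that under any $\gamma\in\Gamma$ and under elliptic shifts, the non-holomorphic monomials transform strictly within the same or lower depth, so the top-depth coefficients must transform as polar Jacobi forms on their own (with a weight shift determined by the monomial). Once this structural lemma is established, the linear system determining the $f_{a,b}$ becomes upper-triangular with nonzero diagonal entries $(2\pi i)^a(-\pi)^b$, hence uniquely solvable, and the induction closes.
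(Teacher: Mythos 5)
Your argument is correct in substance, but note that the paper itself contains no proof of this proposition: it is quoted verbatim from \cite[Proposition~2.18]{vITaylor}, so the only "paper approach" to compare with is the citation. Your plan is the standard structure-theoretic argument (and essentially the one used in the cited reference): the inclusion $J_{k,M}[E_1,E_2]\subseteq\widetilde{J}_{k,M}$ by completing $E_1,E_2$ to $\widehat{E}_1,\widehat{E}_2$ and multiplying almost Jacobi forms, and the reverse inclusion by induction on the depth of the almost Jacobi completion, subtracting $\sum_{a+b=d}f_{a,b}\widehat{E}_1^a\widehat{E}_2^b$ with the $f_{a,b}$ given by the top-depth coefficients. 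Your own ``main obstacle'' is indeed the crux, and it does work out: writing $Z=\frac{z_2}{\tau_2}$, $Y=\frac{1}{\tau_2}$, one has $Z\mapsto (c\tau+d)Z-cz$, $Y\mapsto (c\tau+d)^2Y-2ic(c\tau+d)$ under $\gamma$ and $Z\mapsto Z+m$, $Y\mapsto Y$ under elliptic shifts, so the transformation of $\phi$ is triangular in depth and comparison of the top-degree terms shows that $\phi_{\ell,j}$ with $\ell+j=d$ is a polar Jacobi form of weight $k-j-2\ell$ and index $M$. Two points you should make explicit to have a complete proof: (i) the uniqueness of the expansion $\phi=\sum_{\ell,j}\phi_{\ell,j}\,z_2^j\tau_2^{-\ell-j}$, i.e.\ the algebraic independence of $z_2$ and $\tau_2$ over the field of meromorphic functions in $(z,\tau)$, which is what allows you both to define the depth and $\phi_{0,0}$ unambiguously and to read off the transformation law of the top-depth coefficients; and (ii) that these top-depth coefficients also satisfy condition (3) of the definition (poles only at $\Q\tau+\Q$ and weakly holomorphic modular specializations at non-polar torsion points), which follows from the corresponding condition on $\phi$ together with the transformation law just derived. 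Both are routine, so I see no genuine gap, only details to be written out.
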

Moreover, from \cite[Subsubsection~1.3.3]{OP19} and by \cite[Proposition~2.24]{vITaylor} one has
\begin{prp}\label{prop:der}
The maps $\frac{1}{2\pi i}\pdv{}{\tau}:\widetilde{J}_{k,M}\to \widetilde{J}_{k+2,M}$ and $\pdv{}{z}:\widetilde{J}_{k,M}\to \widetilde{J}_{k+1,M}$ are well-defined derivations.
\end{prp}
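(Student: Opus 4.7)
The strategy relies on the structural description $\widetilde{J}_{k,M} = J_{k,M}[E_1, E_2]$ from the preceding proposition: every polar quasi-Jacobi form is a polynomial in $E_1$ and $E_2$ with polar Jacobi form coefficients. Since $\partial_z$ and $(2\pi i)^{-1}\partial_\tau$ are a priori derivations on the ring of meromorphic functions on $\C\times\H$, the Leibniz rule is automatic, and the entire content of the statement is the \emph{well-definedness}: one must check that these operators send $\widetilde{J}_{k,M}$ into $\widetilde{J}_{k+1,M}$ and $\widetilde{J}_{k+2,M}$, respectively. By Leibniz and the polynomial-ring description, this reduces to two verifications: (i) $\partial_z$ and $(2\pi i)^{-1}\partial_\tau$ send a genuine polar Jacobi form $\phi\in J_{k,M}$ into $\widetilde{J}_{k+1,M}$ and $\widetilde{J}_{k+2,M}$; and (ii) the generators $E_1,E_2$ are sent to elements of the appropriate quasi-Jacobi spaces.

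For (i), I would differentiate the elliptic and modular transformation laws of $\phi$ directly. Differentiating the elliptic relation $\phi(z+m\tau+\ell;\tau) = (-1)^{M(\ell+m)} e^{-2\pi i M(m^2\tau+2mz)}\phi(z;\tau)$ with respect to $z$ introduces an anomaly proportional to $Mm\phi$; differentiating the modular transformation of $\phi$ with respect to $z$ produces a cocycle term proportional to $\frac{cMz}{c\tau+d}\phi$. These are precisely the anomalies cancelled by the almost-Jacobi correction $2\pi i\frac{z_2}{\tau_2}$ appearing in the completion $\widehat{E}_1(z;\tau) = E_1(z;\tau) + 2\pi i\frac{z_2}{\tau_2}$. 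One thus verifies that $\partial_z\phi - 4\pi i M\frac{z_2}{\tau_2}\phi$ transforms as a polar Jacobi form of weight $k+1$ and index $M$, so its constant term in the variables $z_2/\tau_2, 1/\tau_2$ lies in $\widetilde{J}_{k+1,M}$. An analogous (but lengthier) differentiation of the modular law with respect to $\tau$, combined with the completion $\widehat{E}_2(z;\tau) = E_2(z;\tau) - \pi/\tau_2$ and a $\frac{z_2^2}{\tau_2}$-correction, handles $(2\pi i)^{-1}\partial_\tau\phi$.

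For (ii), from the series definition one directly computes $\partial_z E_k = -k E_{k+1}$, so $\partial_z E_1 = -E_2 \in \widetilde{J}_{2,0}$ and $\partial_z E_2 = -2E_3 \in J_{3,0}\subset\widetilde{J}_{3,0}$. The action of $(2\pi i)^{-1}\partial_\tau$ on $E_1$ and $E_2$ follows from the standard heat-equation-type identities for Eisenstein--Kronecker series, which express $\partial_\tau E_j$ as a polynomial in $E_1, E_2$ and the quasimodular form $e_2$ of the correct weight $j+2$. The main obstacle will be the bookkeeping in (i): verifying that the almost-Jacobi completion of $\partial_z\phi$ or $\partial_\tau\phi$ really lies in the correct polynomial ring in $\frac{z_2}{\tau_2}$ and $\frac{1}{\tau_2}$ (with no spurious higher powers of $z_2$ appearing), and that extraction of the holomorphic part commutes cleanly with the differential operators --- equivalently, that all non-holomorphic terms produced by differentiating $\widehat{E}_1$ and $\widehat{E}_2$ themselves reorganize into the ring $J_{*,*}[E_1,E_2]$ at the claimed weight and index.
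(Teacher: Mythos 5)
The paper does not prove this proposition at all: it is imported verbatim from \cite[Subsubsection~1.3.3]{OP19} and \cite[Proposition~2.24]{vITaylor}, so there is no in-text argument to compare against. Your sketch is essentially the proof given in those references, and its architecture is sound: reduce via $\widetilde{J}_{k,M}=J_{k,M}[E_1,E_2]$ and the Leibniz rule to (i) the action on a polar Jacobi form $\phi$ and (ii) the action on the generators; for (i) differentiate the elliptic and modular laws and absorb the resulting anomalies into the almost-Jacobi ring; for (ii) use $\pdv{}{z}E_k=-kE_{k+1}$ and the heat-equation identities (note $e_2=E_2-\wp\in J_{2,0}[E_1,E_2]$, so your weight-$3$ and weight-$4$ expressions for $\frac{1}{2\pi i}\pdv{}{\tau}E_1$ and $\frac{1}{2\pi i}\pdv{}{\tau}E_2$ do land in the right spaces). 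Two small corrections. First, with the paper's convention $e^{\frac{2\pi i cMz^2}{c\tau+d}}$ in the modular law, the completion of $\pdv{}{z}\phi$ is $\pdv{}{z}\phi+4\pi iM\frac{z_2}{\tau_2}\phi$, not minus; your sign is inconsistent with the completion $\widehat{E}_1=E_1+2\pi i\frac{z_2}{\tau_2}$ that you invoke as the model. Second, the genuinely nontrivial bookkeeping is not the powers of $z_2/\tau_2$ and $1/\tau_2$ (the completion of $\frac{1}{2\pi i}\pdv{}{\tau}\phi$ involves exactly the terms $\frac{1}{\tau_2}\phi$, $\frac{z_2}{\tau_2}\pdv{}{z}\phi$, and $\frac{z_2^2}{\tau_2^2}\phi$, all of the allowed shape $\frac{z_2^j}{\tau_2^{\ell+j}}$) but condition (3) of the definition: one must check that the coefficient functions $\phi_{\ell,j}$ of the completed derivative, evaluated at torsion points $z\in\Q\tau+\Q$, remain weakly holomorphic in $\tau$. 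Since such points depend on $\tau$, this uses the chain rule together with the fact that Taylor coefficients of (quasi-)Jacobi forms at torsion points are quasimodular; this is where the substance of the cited \cite[Proposition~2.24]{vITaylor} lies, and your sketch only gestures at it.
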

As the specialization $z=0$ of a quasi-Jacobi form yields a quasimodular form, this implies quasi-Jacobi forms admit a Laurent expansion in~$z$ for which the coefficients are quasimodular forms.

\subsection{Appell functions} 
Zwegers' {\it multivariable Appell functions} \cite{Zw2} are given by
	\begin{equation}\label{eq:appel}
		A_\ell(z,w;\t) := \z^\frac\ell2 \sum_{m\in\Z} \frac{(-1)^{\ell m} q^\frac{\ell m(m+1)}2 \xi^m}{1-\z q^m} \quad \left(\z=e^{2\pi iz}, \xi:=e^{2\pi iw}\right).
	\end{equation}
	Now recall the completions
	\begin{equation*}
		\widehat A_\ell(z,w;\t) = A_\ell(z,w;\t) + \frac i2 \sum_{k=0}^{\ell-1} \z^k \vartheta\left(w+k\t+\frac{\ell-1}2;\ell\t\right) R\!\left(\ell z-w-k\t-\frac{\ell-1}2;\ell\t\right),
	\end{equation*}
where $\vartheta$ is the Jacobi theta function defined in \eqref{eq:theta} and
\begin{align}\label{eq:R}
	R(z;\t) &:= \sum_{n\in\Z+\frac12} \left(\sgn(n)-E\!\left(\left(n+\frac{z_2}{\tau_2}\right)\sqrt{2\tau_2}\right)\right) (-1)^{n-\frac12} q^{-\frac{n^2}2} e^{-2\pi inz},
\end{align}
where for $x\in \R$ we define $E(x) := 2 \int_0^x e^{-\pi t^{2}}  dt$. By \cite[Theorem 2.2]{Zw}, for $m_1, m_2, \ell_1, \ell_2\in\Z$ we have the \emph{elliptic transformation}
	\begin{equation}\label{E:ellA}
		\widehat A_\ell(z+m_1\t+\ell_1,w+m_2\t+\ell_2;\t) = (-1)^{\ell(m_1+\ell_1)} \z^{\ell m_1-m_2} \xi^{-m_1} q^{\frac{\ell m_1^2}2 -m_1m_2} \widehat A_\ell(z,w;\t).
	\end{equation}
	For $\abcd\in \Gamma$, 	we have
	\begin{equation}\label{E:transA}
		\widehat A_\ell\left(\frac{z}{c\t+d},\frac{w}{c\t+d};\frac{a\t+b}{c\t+d}\right) = (c\t+d) e^{\frac{\pi ic}{c\t+d}\left(-\ell z^2+2zw\right)} \widehat A_\ell(z,w;\t).
	\end{equation}	
	Among other places, these Appell--Lerch sums occur in the decomposition of meromorphic Jacobi forms as in~\cite{DMZ14}; see also~\cite{BR10, HM14}.
	
\subsection{False theta functions}
In~\cite{BN}, false theta functions were embedded in a modular framework. For example, 
\begin{equation}\label{eq:false}
	\psi(z;\t) := \sum_{n\in\Z+\frac{1}{2}} \sgn(n)  e^{2\pi in\left(z+\frac12\right)} q^\frac{n^2}2,
\end{equation}
was obtained as the limit $\psi(z;\t)=\lim_{t\to \infty} \widehat{\psi}(z;\tau,\tau+it+\varepsilon)$ if $-\frac{1}{2}<\frac{z_2}{\tau_2}<\frac{1}{2}$ and $\varepsilon>0$ arbitrary, where
\begin{equation*}
	\widehat{\psi}(z;\t,\omega) := \sum_{n\in\Z+\frac{1}{2}} E\!\left(-i\sqrt{\pi i(\omega-\tau)}\left(n+\frac{z_2}{\tau_2}\right)\right) e^{2\pi in\left(z+\frac12\right)} q^{\frac{n^2}{2}}.
\end{equation*}
Note that
\begin{align}
	\widehat{\psi}(z+m\tau+\ell;\tau,\omega) &= (-1)^{m+\ell} \zeta^{-m} q^{-\frac{m^2}2} \widehat{\psi}(z;\tau,\omega),\label{el} \\
	\widehat{\psi}\left(\frac{z}{c\tau+d}; \frac{a\tau+b}{c\tau+d},\frac{a\omega+b}{c\omega+d}\right) &= \chi_{\tau,\omega}(\gamma) \nu_\eta(\gamma)^3 (c\tau+d)^{\frac{1}{2}} e^\frac{\pi icz^2}{c\tau+d} \widehat{\psi}(z;\tau,\omega),\label{mod}
\end{align}
for $\gamma=\left(\begin{smallmatrix} a & b \\ c & d\end{smallmatrix}\right)\in \sltwoz$ and $\ell,m\in \Z$.  Here,
\[
\chi_{\tau,\omega}(\gamma) := \sqrt{\frac{i(\omega-\tau)}{(c\tau+d)(c\omega+d)}}\frac{\sqrt{c\t+d}\sqrt{c\omega+d}}{\sqrt{i(\omega-\t)}}.
\]

\section{Proof of \cref{thm:1} and \cref{thm:2}}\label{sec:12}
Set $u_0:=1$
and  for $m\in\N$ let $u_{-m}\defis u_m^{-1}$. We consider the ``theta function''
\[ \At_{N}(\zeta,\vec{u}) 
:= \sum_{m\in \Z} \zeta^mu_m^{Nm}\in \Quzeta \qquad (N \in \N),\]
which under the substitution $u_m=q^m$ equals an actual theta function. Moreover, we consider the ``theta function associated to a hyperbolic form''
\begin{align}\label{eq:hyptheta}
	\frac{1}{2}\sum_{m,r\in \Z} \left(\sgn\left(m+\frac{1}{2}\right)+\sgn\left(r-\frac{1}{2}\right)\right) \varrho^m \xi^r u_m^r,
\end{align}
for which the constant term around $w=0$, where $\xi=e^{2\pi i w}$, equals 
\begin{align*}
	\Bt(\varrho,\vec{u}) :&= 
 \frac{\varrho}{1-\varrho}+\frac{1}{2} + \sum_{\substack{m,r\in \Z\\ mr>0}} \sgn(m) \varrho^m u_m^r.
\end{align*}
From \eqref{eq:shatbrac} it follows that under the substitution $u_m=q^m$ the latter equals a meromorphic Jacobi form. 
\begin{prp}\label{prp:ubrackst}
We have $\langle t_{(N)}(\zeta,\cdot)\rangle_{\vec{u}} = A_N(\zeta,\vec{u})$ and $\langle s(\varrho,\cdot)\rangle_{\vec{u}} = B(\varrho,\vec{u})$.
\end{prp}
\begin{proof}
 Note that $\langle 1 \rangle_{\vec{u}}=1$. By \Cref{lem:314} we directly find that
\begin{align*}
	\langle t_{(N)}(\zeta;\lambda) \rangle_{\vec{u}} &= \langle 1\rangle_{\vec{u}} \+ \sum_{m\ge1} (\zeta^m+\zeta^{-m})\,\langle \delta_{r_m(\lambda) \geq Nm} \rangle_{\vec{u}} \\
	&= 1 \+ \sum_{m\ge1}\sum_{r\geq 1} (\zeta^m+\zeta^{-m})\, \delta_{r = Nm} u_m^r = \sum_{m\in \mathbb{Z}} \zeta^m u_m^{Nm} = A_N(\zeta,\vec{u}), \\
	\langle s(\varrho;\lambda)\rangle_{\vec{u}} &= \left\langle\frac{\varrho}{1-\varrho}+\frac{1}{2}\right\rangle_{\vec{u}}\+ \sum_{m\ge1} \left(  \varrho^m \meno \varrho^{-m} \right)\langle r_m(\lambda) \rangle_{\vec{u}} \\
	&= \frac{\varrho}{1-\varrho}+\frac{1}{2}\+ \sum_{m\ge1}\sum_{r\geq 1} \left(  \varrho^m \meno \varrho^{-m} \right) u_m^r = \Bt(\varrho,\vec{u}). \qedhere
\end{align*}
\end{proof}

A reformulation of~\cite[Theorem 3.5.4]{vISymmetric} generalizing \Cref{lem:314} is the following. For $g,h:\N\to \Q$, define the \emph{discrete convolution product} of~$g$ and~$h$ by
\[(g*h)(n)\defis\sum_{j=1}^{n-1} g(j)h(n-j)\]
and denote the convolution product of functions~$g_1,\ldots, g_n$ by
\begin{equation*}\label{eq:conv}
	\bigconv_{j=1}^n g_j \defis g_1 * \cdots * g_n.
\end{equation*}
%
\begin{prp} \label{lem:connected}
	Let $n\in\N$ and $g_1,\ldots,g_n:\N_{0}\to \C$. For $m_1,\ldots,m_n\in\N$, write $G_j:\partitions\to \C$ for the function $G_j(\lambda) := g_j(r_{m_j}(\lambda))$. Then the following hold:
	\begin{enumerate}[leftmargin=*,label=\rm(\arabic*)]
\item \label{lem:connected 1} We have
	\[  
		\langle G_1  \otimes \cdots \otimes G_n \rangle_{\vec{u}} \=
		\delta_{m_1=\ldots=m_n}
		\sum_{r\ge1} g_{[n]}(r)u_m^r ,
	\]
	where $m:=m_1=\ldots=m_n$ and
	\[
		g_{[n]}\=\sum_{\alpha\in\Pi(n)} \mob \bigconv_{A\in \alpha} \partial g_A.
	\]
\item If, moreover, $g_n(r)=r$ for all $r\in \N_0$, then
	\[  
		\langle G_1  \otimes \cdots \otimes G_n \rangle_{\vec{u}} \=
		\delta_{m_1=\ldots=m_n}
		\sum_{r\ge1} r{g}_{[n-1]}(r)u_m^r.
	\]
\end{enumerate}
\end{prp}

We let $\mathcal{A}^\circ$ be the algebra generated by the $A_N$ for all $N\in \N$ and $\mathcal{B}^\circ$ be the algebra generated by the $\Bt$. 
\begin{thm}
The algebras $\mathcal{A}$ and $\mathcal{A}^\circ$ are isomorphic and $\mathcal{B}^\circ$ is contained in $\langle \mathcal{B} \rangle_{\vec{u}}$.
\end{thm}
\begin{proof}
	First we show that the image $\As$ under the $\vec{u}$-bracket is contained in $\mathcal{A}^\circ$. Let $\vec{N}=(N_1,\ldots,N_\ps)\in \N^\ps$. As every $\vec{u}$-bracket can expressed polynomially in terms of connected $\vec{u}$-brackets (see \eqref{eq:mobiusubrack}), it suffices to show that the connected bracket of the $t_{N_j}$, defined by~\eqref{eq:fa}, is an element of $\As^\circ$. By \Cref{lem:connected} (1), we obtain
	\[ \langle t_{N_1} \otimes t_{N_2} \otimes \cdots \otimes t_{N_\ps} \rangle_{\vec{u}} \= \delta_{\ps,1}+\sum_{m,r\geq 1}g_{m,\vec{N}}(r)u_m^r \prod_{j}\left(\zeta_j^m+\zeta_j^{-m}\right) ,\]
	where
	\begin{align*}
	g_{m,\vec{N}}(r) 
	&:=\sum_{\substack{\alpha\in\Pi(\vec{N}) \\ \alpha=\{A_1,\ldots,A_t\}}} \hspace{-0.5cm} \mob \!\!\!\sum_{\substack{\ell_1,\ldots,\ell_t\\ \ell_1+\ldots+\ell_t=r}} \prod_{j=1}^t \partial\left(x\mapsto \prod_{N\in A_j} \delta_{x \geq  Nm}\right)(\ell_j) \\
	&=\sum_{\substack{\alpha\in\Pi(\vec{N}) \\ \alpha=\{A_1,\ldots,A_t\}}} \hspace{-0.5cm} \mob \sum_{\substack{\ell_1,\ldots,\ell_t\\ \ell_1+\ldots+\ell_t=r}} \prod_{j=1}^t \delta_{\ell_j = (\max A_j)m} 
	= \sum_{\alpha\in\Pi(\vec{N})} \mob  \delta_{r=Mm}.
	\end{align*}
	Here we set $M:=\sum_{j=1}^t\max A_j$. Note that 
	$\sum_{\alpha \in \Pi(n)} \mob =0$ for $n\geq 2$. 
	Hence, the connected $\vec{u}$-bracket is a linear combination of elements of $\As$, as desired:
	\begin{equation}\label{eq:ubrackoffa}
	\langle t_{N_1} \otimes t_{N_2} \otimes \cdots \otimes t_{N_\ps} \rangle_{\vec{u}} 
	\= \frac{1}{2}\sum_{\alpha\in\Pi(\vec{N})} \mob \sum_{m\in \Z} u_m^{Mm} \prod_{j}\left(\zeta_j^m+\zeta_j^{-m}\right).
	\end{equation}

Next, we show that the $\vec{u}$-bracket restricts to an isomorphism between $\As$ and $\As^\circ$. Injectivity is clear since the $\vec{u}$-bracket is an isomorphism between algebras containing~$\As$ and~$\As^\circ$. For surjectivity, it suffices to show that the $\vec{u}$-bracket is surjective on monomials $\prod_{j=1}^d B(\varrho_j,\vec{u}) = \prod_{j=1}^d \langle t(\varrho_j,\cdot)\rangle_{\vec{u}}$ (by \Cref{prp:ubrackst}). We show the stronger statement that for $f,g\in \As$ there exists $h\in \As$ such that 
\begin{equation}\label{eq:claim}
	\langle f\rangle_{\vec{u}}\langle g\rangle_{\vec{u}} = \langle h \rangle_{\vec{u}}.
\end{equation} 
First of all, if $f=s(\varrho_1,\cdot)$ and $g=s(\varrho_2,\cdot)$, then 
\[
\langle f\rangle_{\vec{u}}\langle g\rangle_{\vec{u}} = \langle fg\rangle_{\vec{u}} - \langle f \otimes g\rangle_{\vec{u}}
\]
and by \eqref{eq:ubrackoffa} we have that $\langle f\otimes g\rangle_{\vec{u}}=\langle h\rangle_{\vec{u}}$ for some $h$. Finally, by \cite[Proposition~3.2.7]{vISymmetric} one can reduce the general case to this case. Hence, \eqref{eq:claim} follows, which implies the surjectivity.

	By Corollary~3.5.6 in~\cite{vISymmetric} the $n$-fold connected $\vec{u}$-bracket of $s$ equals
	\begin{equation}\label{eq:sss}\langle s \otimes \cdots\otimes s\rangle_{\vec{u}} \= \left(\frac{\varrho}{1-\varrho}+\frac{1}{2}\right)\delta_{n,1}+ \sum_{m,r\geq 1} \prod_{j=1}^n \left(\varrho_j^m-\varrho_j^{-m}\right) r^{n-1} u_m^r.
\end{equation}
Note that this is an element of $\mathcal{B}^\circ$ if $n=1$. The fact that $\mathcal{B}^\circ$ is contained in $\langle \mathcal{B} \rangle_{\vec{u}}$ follows analogously to the surjectivity above. 
\end{proof}

We are now ready to prove our first two main theorems.
\begin{proof}[Proof of \Cref{thm:1}] 
	As the $q$-bracket of elements in $\mathcal{A}$ are obtained by letting $u_j=q^j$ in~\eqref{eq:ubrackoffa}, both statements follow immediately.
\end{proof}
\begin{proof}[Proof of \Cref{thm:2}] 
Recall the Jacobi form $F(z,w;\tau)$; see~\eqref{eq:F}. Note this function has a pole if $z=0$ or $w=0$. The constant term in the expansion around $w=0$ equals 
\[ 
	\coef{w^{0}} F(z,w;\tau) = \frac{\z}{\z-1}-\frac{1}{2}-\sum_{m,r\geq 1} \left(\z^m-\zeta^{-m}\right) q^{mr} 
\]
and for $n\in\N$, the coefficient of $w^{n-1}$ of $F(z,w;\t)$ in the expansion around $w=0$ equals 
\[ 
	\frac{(n-1)!}{(2\pi i)^{n-1}}\coef{w^{n-1}} F(z,w;\tau) = \frac{\zeta_{1}}{\zeta-1}\delta_{n=1}+\frac{B_{n}}{n}-\sum_{m,r\geq 1} \left(\z^m+(-1)^n\z^{-m}\right)r^{n-1} q^{mr}.
\]
Writing $\varrho_j=e^{2\pi i \mathfrak{z}_j}$, we conclude that $\langle s\otimes \cdots \otimes s \rangle_{q}$, given by \eqref{eq:sss}, equals
\[ 
	\langle s \otimes \cdots\otimes s\rangle_{q} \= -\frac{(n-1)!}{2(2\pi i)^{n-1}}\coef{w^{n-1}}  \sum_{\bm\nu\in \{\pm 1\}^n}\!\! F(\nu_1\mathfrak{z}_1+\ldots+\nu_n \mathfrak{z}_n,w;\tau) \prod_{j}\nu_j.
\]
Since the coefficients of a Jacobi form are quasi-Jacobi forms by \cite[Subsubsection~1.3.3]{OP19} and \cite[Proposition~2.24]{vITaylor} (of which \Cref{prop:der} is a special case), we conclude $\langle s \otimes \cdots\otimes s\rangle_{q}$ is a quasi-Jacobi form. By~\eqref{eq:ubrackoffa} and \Cref{lem:connected}\,(2), for $n\in\N$ we have
	\begin{multline*}
		\langle t_{N_1} \otimes \cdots \otimes t_{N_\ps} \otimes s \otimes \cdots\otimes s\rangle_q\\
	=\frac{1}{2}\sum_{\alpha \in \Pi(\vec{N} )} \mob \sum_{m\geq 1}\prod_{j=1}^\ps \left(\zeta_j^m+\zeta_j^{-m}\right)\prod_{j=1}^n (\varrho_j^m-\varrho_j^{-m})
	(Mm)^n q^{Mm^2},
	\end{multline*}
where we take $n$ times the function~$s$ and $M$ is as in the proof of \Cref{thm:1}. 
\end{proof}
\begin{rmk}\itshape
We have
\[ \pdv{}{\mathfrak{z}_1}\cdots \pdv{}{\mathfrak{z}_n}\langle s \otimes \cdots\otimes s\rangle_{q} \=  \frac{1}{2}\left(q\pdv{}{q}\right)^{n-1}\!\!\sum_{\bm\nu\in \{\pm 1\}^n}\!\! E_2(\nu_1\mathfrak{z}_1+\ldots+\nu_n \mathfrak{z}n;\tau).
\]
Namely, by~\eqref{eq:sgenser}, we find that $\pdv{}{\mathfrak{z}_1}\cdots \pdv{}{\mathfrak{z}_n}\langle s \otimes \cdots\otimes s\rangle_{q}$  equals $2^n\cdot2\pi i$ times the left-hand side of \cite[Corollary 3.3.2]{vISymmetric} after substituting $z_j = 2\pi i \mathfrak{z}_j$. Similarly, we find $E_2(z;\tau) = (2\pi i)^2 P(2\pi i z)$, where the propagator $P$ was defined in \cite{vISymmetric}. The inversion formula for connected brackets \eqref{eq:connectedbracket} allows to replace the sum over $\alpha\in \Pi(n)$ by $A=\{1,\ldots,n\}$. Hence, also the right-hand sides agree up to the same factor and substitution. 
\end{rmk}

\section{Proof of \Cref{thm:falsemock}}\label{sec:falsemock}
In this section, we prove \Cref{thm:falsemock}, i.e., we show that $\langle t_N \otimes s^* \rangle_{q} $ is a linear combination of a Jacobi theta function, a false theta function, and an Appell--Lerch function. Set
\begin{align} \nonumber
\Theta^{\ast}(\z;q) &:= \Theta(z;\tau)=\sum_{n\in \Z} \zeta^n q^{n^2}, \quad T^{\ast}(\z;q) := T(z;\t) := \sum_{n\in \Z} \sgn\left(n+\frac{1}{2}\right) \zeta^n q^{n^2}, \\
f_N(z,w;\t) &:= \frac12 \sum_{\substack{m,n\in\Z}}  \left(\sgn\left(n+\frac12\right)+\sgn\left(m-\frac12\right)\right) e^{2\pi i(mz+nw)} q^{Nm^2+mn}.\label{E:fN}
\end{align}
Then, in~\eqref{tsbracket} we express $\langle t_N \otimes s^* \rangle_{q} $ as a certain linear combination of these functions. Moreover, in the next subsections, we obtain the transformation of $T$ as a false theta function and $f_N$ as an Appell--Lerch sum. 

\subsection{The false theta function $T$}
Let $\omega\in\H$. Define
\[
\widehat{T}(z;\tau,\omega) = \sum_{n\in\Z} E\!\left(-i\sqrt{2\pi i(\omega-\tau)}\left(n+\frac{z_2}{2\tau_2}\right)\right)  \zeta^{n} q^{n^2}.
\]
Then, for $\varepsilon>0$ and $0\leq z_2 \leq 2\tau_2$ we have
\begin{equation*}
	T(z;\tau) = \lim_{t\to\infty} \widehat T(z;\tau,\tau+it+\varepsilon).
\end{equation*}
The completed function~$\widehat{T}$ satisfies the following transformations. 

\begin{prp}\label{prop:Subgroup4N}\hspace{0cm}
	\begin{enumerate}[label=\rm(\arabic*),wide,labelwidth=!,labelindent=0pt]
		\item For $m,\ell \in \N$ one has
		\begin{equation*}
			\widehat{T}\!\left( z+2m\tau+\ell; \tau, \omega\right) = \zeta^{-m} q^{-\frac{m^{2}}{2}} \widehat{T}(z;\tau,\omega).
		\end{equation*}
		\item\label{I:gamma} For $\gamma=\abcd\in \Gamma_0(4)$ one has
		\begin{equation*}
			\hspace{-0.15cm}\widehat{T}\!\left(\frac{z}{c\tau+d}; \frac{a\tau+b}{c\tau+d},\frac{a\omega+b}{c\omega+d}\right) 
			\hspace{-0.1cm}=  \left(\frac{-1}{d}\right)\chi_{\tau,\omega}(\gamma) j(\gamma,\tau) e^{\frac{\pi icz^2}{2(c\tau+d)}}
			\widehat{T}\left(z;\tau,\omega\right),
		\end{equation*}
	\end{enumerate}
where the automorphy factor $j(\gamma,\tau)$ for $k=\frac{1}{2}$ is given by \eqref{eq:aut}.
\end{prp}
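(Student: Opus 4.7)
The plan is to derive both transformations by relating $\widehat T$ to the completed false theta function $\widehat\psi$ from Section~2 after a doubling of the elliptic and modular parameters, and then invoking the transformation law~\eqref{mod} for~$\widehat\psi$.

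Part~(1) follows by direct manipulation of the defining series. Under $z\mapsto z+2m\tau+\ell$ with $m,\ell\in\Z$ one has $z_2\mapsto z_2+2m\tau_2$, so the argument of $E$ shifts as $n+\tfrac{z_2}{2\tau_2}\mapsto n+m+\tfrac{z_2}{2\tau_2}$, while $\zeta^n\mapsto\zeta^n q^{2mn}e^{2\pi in\ell}=\zeta^n q^{2mn}$. Reindexing $n\mapsto n-m$ recovers $\widehat T(z;\tau,\omega)$ up to the claimed multiplier.

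For part~(2) the starting point is the identity
\[
\widehat T(z;\tau,\omega)\;=\;e^{-\pi iz+\tfrac{\pi i\tau}{2}}\,\widehat\psi\!\left(z-\tau-\tfrac12;\,2\tau,2\omega\right),
\]
which I would establish first by completing the square $n^2=(n+\tfrac12)^2-(n+\tfrac12)+\tfrac14$ in the exponent of~$q$: the substitution $n\mapsto n-\tfrac12$ converts the sum over~$\Z$ defining~$\widehat T$ into a sum over~$\Z+\tfrac12$ matching the $\widehat\psi$-series at doubled periods, and the identity $\sqrt{2\pi i(\omega-\tau)}=\sqrt{\pi i(2\omega-2\tau)}$ makes the $E$-arguments coincide. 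For $\gamma=\abcd\in\Gamma_0(4)$ set
\[
\gamma'\;:=\;\begin{pmatrix}a&2b\\c/2&d\end{pmatrix}\in\sltwoz,
\]
which is integral and has $\det\gamma'=1$ because $4\mid c$, and satisfies $\gamma'\cdot(2\tau)=2(\gamma\tau)$, $\gamma'\cdot(2\omega)=2(\gamma\omega)$, together with $(c/2)\cdot(2\tau)+d=c\tau+d$. Applying \eqref{mod} to $\widehat\psi(\cdot;2\tau,2\omega)$ with $\gamma'$, and then re-expressing $\widehat\psi(\cdot;2\tau,2\omega)$ through the identity above as a multiple of~$\widehat T(\cdot;\tau,\omega)$, transforms $\widehat T(z/(c\tau+d);\gamma\tau,\gamma\omega)$ into a multiplier times $\widehat T(z^{\ast\ast};\tau,\omega)$, where $z^{\ast\ast}$ differs from $z$ by the lattice vector $2m\tau+\ell$ with $m=\tfrac12(1-a-\tfrac{c}{2})$ and $\ell=-b-\tfrac{d-1}{2}$, both integers since $a,d$ are odd and $4\mid c$. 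This residual shift is then absorbed using part~(1).

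The main obstacle is reducing the resulting multiplier to the stated form $\bigl(\tfrac{-1}{d}\bigr)\chi_{\tau,\omega}(\gamma)\,j(\gamma,\tau)\,e^{\pi icz^2/(2(c\tau+d))}$. The factor $\chi_{\tau,\omega}$ transfers unchanged because $\chi_{2\tau,2\omega}(\gamma')=\chi_{\tau,\omega}(\gamma)$, the doubling cancelling exactly in the ratio defining~$\chi$. The index-$\tfrac14$ quadratic exponential results from combining the Jacobian $e^{\pi i(c/2)z^{\ast 2}/(c\tau+d)}$ of~\eqref{mod} with the phases $e^{-\pi iz}$, $e^{\pi iz^\ast}$, and the quadratic contribution of the elliptic correction from part~(1). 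The truly delicate step is to show that the Dedekind cube $\nu_\eta(\gamma')^3$ from~\eqref{mod}, once combined with the eighth roots of unity coming from $e^{\pi i\tau/2}$ and the half-integer translate of~$z$, collapses via the explicit formula~\eqref{E:MultEt} to $\bigl(\tfrac{-1}{d}\bigr)\,j(\gamma,\tau)\,(c\tau+d)^{-1/2}$, making use of the identity $\varepsilon_d^{-2}=\bigl(\tfrac{-1}{d}\bigr)$ for odd~$d$. The assumption $4\mid c$ is what makes this multiplier simplification possible.
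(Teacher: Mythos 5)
Your argument follows the paper's proof essentially verbatim: part (1) by shifting and reindexing the defining series (the paper instead quotes the elliptic transformation \eqref{el} of $\widehat\psi$, which is the same computation), and part (2) via the identity $\widehat{T}(z;\tau,\omega)=\zeta^{-\frac12}q^{\frac14}\,\widehat{\psi}\!\left(z-\tau-\tfrac12;2\tau,2\omega\right)$, the modular transformation \eqref{mod} applied with the matrix $\pmat{a & 2b \\ c/2 & d}$, removal of the residual lattice shift with exactly the same $m=\tfrac{1-a}{2}-\tfrac{c}{4}$, $\ell=\tfrac{1-d}{2}-b$ (integral since $4\mid c$), the identity $\chi_{2\tau,2\omega}=\chi_{\tau,\omega}$, and the explicit eta-multiplier formula \eqref{E:MultEt}. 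The only differences are cosmetic (you absorb the shift via part (1) rather than via \eqref{el} directly), and your final root-of-unity bookkeeping is left at the same level of detail as the paper's own ``it is not hard to finish the proof.''
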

\begin{proof}
	\begin{enumerate}[label=\rm(\arabic*),wide,labelwidth=!,labelindent=0pt]
		\item The claim follows directly from \eqref{el}.
		\item Note that
		\begin{align*}
			T(z;\t) &= \sum_{n\in \Z+\frac{1}{2}} \sgn(n) \z^{n-\frac{1}{2}} q^{\left(n-\frac12\right)^2} = \zeta^{-\frac{1}{2}} q^{\frac{1}{4}} \sum_{n\in \Z+\frac{1}{2}} \sgn(n) e^{2\pi i n\left(z - \tau - \frac{1}{2} + \frac{1}{2}\right)} q^{n^{2}}\\
			&=\zeta^{-\frac{1}{2}} q^{\frac{1}{4}} \psi\left(z- \tau-\frac{1}{2};2\tau\right),
		\end{align*}
		where $\psi$ is the false theta function given in \eqref{eq:false}. Note that,
		\[
			\widehat{T}(z;\tau,\omega) = q^{\frac14} \z^{-\frac12} \widehat{\psi}\left(z-\t-\frac12;2\t,2\omega\right).
		\]
		Hence, for $\gamma=\abcd \in \Gamma_0(2)$, we obtain from \eqref{mod}
		\begin{align*}
			&\hspace{-1cm}\widehat{T}\left(\frac{z}{c\tau\!+\!d}; \frac{a\tau\!+\!b}{c\tau\!+\!d},\frac{a\omega\!+\!b}{c\omega\!+\!d}\right) 
			= e^{\frac{\pi i}{2} \frac{a\tau \!+\! b}{c \tau \!+\! d} \!-\!\frac{\pi i z}{c\tau \!+\! d}} \widehat{\psi}\left(\frac{z}{c\tau\!+\!d}\!-\!\frac{a\tau\!+\!b}{c\tau\!+\!d}\!-\!\frac12;\frac{a(2\tau)\!+\!2b}{\frac{c}{2}(2\tau)\!+\!d},\frac{a(2\omega)\!+\!2b}{\frac{c}{2}(2\omega)\!+\!d}\right) \\
			&= e^{\frac{\pi i}{2} \frac{a\tau + b}{c \tau +d} -\frac{\pi i z}{c\tau + d}}\chi_{2\tau,2\omega}
			\begin{psmallmatrix}
			 	a & 2b\\
			 	\frac{c}{2} & d
			\end{psmallmatrix}
			\nu_\eta
			\begin{psmallmatrix}
				a & 2b\\
				\frac{c}{2} & d
			\end{psmallmatrix}
			^3 (c\tau+d)^{\frac{1}{2}}\\
			&\hspace{1cm} \times e^{\frac{\pi i c}{2(c\tau + d)}\left(z - (a\tau + b) - \frac{1}{2}(c\tau + d)\right)^{{2}}} \widehat{\psi}\left(z-(a\tau+b)-\frac{1}{2}(c\tau+d);2\tau,2\omega\right).
		\end{align*}
		We now write
		\begin{equation*}
			\widehat{\psi}\left(z - (a\tau\!+\!b)-\frac{1}{2}(c\tau\!+\!d);2\tau,2\omega\right) = \widehat{\psi}\left(z-\tau-\frac{1}{2} + \left(\frac{1\!-\!a}{2}\!-\!\frac{c}{4}\right)2\tau+\frac{1\!-\!d}{2}-b; 2\tau,2\omega\right)\!.
		\end{equation*}
		Now assume that $4\mid c$. Then we use \eqref{el} with $\tau \mapsto 2\tau, \omega \mapsto 2\omega, m=\frac{1-a}{2} - \frac{c}{4}, \ell = \frac{1-d}{2}-b$, and $z\mapsto z-\tau-\frac{1}{2}$ to obtain
		\begin{multline*}
			\widehat{\psi}\left(z-\tau-\frac{1}{2}+\left(\frac{1-a}{2} - \frac{c}{4}\right) 2\tau + \frac{1-d}{2}-b;2\tau,2\omega\right)\\
			= (-1)^{\frac{1-a}{2} - \frac{c}{4} + \frac{1-d}{2} + b} e^{2\pi i \left(\frac{a-1}{2} + \frac{c}{4}\right)\left(z-\tau-\frac{1}{2}\right) - 2N\pi i \left(\frac{1-a}{2}-\frac{c}{4}\right)^{2} \tau} \widehat{\psi}\left(z-\tau-\frac{1}{2};2\tau,2\omega\right).
		\end{multline*}
		Now, a computations using $ad-bc=1$ and $4\mid c$ yields
		\begin{multline*}
			e^{\frac{\pi i}2\frac{a\tau+b}{c\tau+d}-\frac{\pi iz}{c\tau+d}+\frac{\pi ic}{2(c\tau+d)}\left(z-(a\tau+b)-\frac12(c\tau+d)\right)^2} e^{2\pi i\left(\frac{a-1}2+\frac c{4}\right)(z-\tau)-2\pi i\left(\frac{1-a}2-\frac c{4}\right)^2\tau}\\
			= e^{\frac{\pi i\tau}2-\pi iz+\frac{\pi icz^2}{2(c\tau+d)}+2\pi i\left(\frac{ab}{4} + \frac{cd}{16}\right)}.
		\end{multline*}
		Also, note
		\[\chi_{2\tau,2\omega}
			\begin{pmatrix}
			 	a & 2b\\
			 	\frac{c}{2} & d
			\end{pmatrix} = \chi_{\tau,\omega}\begin{pmatrix}
			 	a & b\\
			 	c & d
			\end{pmatrix}. \]
		We thus have
		\begin{align*}
			&\widehat{T}\!\left(\frac{z}{c\tau+d}; \frac{a\tau+b}{c\tau+d},\frac{a\omega+b}{c\omega+d}\right)
			=(-1)^{\frac{1-d}{2} + b}  \chi_{\tau,\omega}
			\begin{pmatrix}
			 	a & b\\
			 	c & d
			\end{pmatrix}
			\nu_\eta
			\begin{pmatrix}
				a & 2b\\
				\frac{c}{2} & d
			\end{pmatrix}
			^3 (c\tau+d)^{\frac{1}{2}}\\
			&\hspace{5cm}\times e^{\frac{\pi i\tau}2-\pi iz+\frac{\pi icz^2}{2(c\tau+d)}+2\pi i\left(\frac{ab}{4} + \frac{cd}{16}\right)}
			\widehat{\psi}\left(z-\tau-\frac{1}{2};2\tau,2\omega\right) \\
			&\quad= (-1)^{\frac{1-d}{2} + Nb}  \chi_{\tau,\omega}
			\begin{pmatrix}
			 	a & b\\
			 	c & d
			\end{pmatrix}
			\nu_\eta
			\begin{pmatrix}
				a & 2b\\
				\frac{c}{2} & d
			\end{pmatrix}
			^3 (c\tau+d)^{\frac{1}{2}} e^{\frac{\pi icz^2}{2(c\tau+d)}+2\pi i\left(\frac{ab}{4} + \frac{cd}{16}\right)}
			\widehat{T}\left(z;\tau,\omega\right) .
		\end{align*}
		Using \eqref{E:MultEt}, it is not hard to finish the proof of \ref{I:gamma}.\qedhere
	\end{enumerate}
\end{proof}

\subsection{The Appell--Lerch function $f_N$}
Next, we want to find modularity properties of $f_N$ defined in \eqref{E:fN}. In particular, we are interested in the cases $z\in\{0,Nw\}$.  
Define
\begin{multline*}
	\widehat f_N(z,w;\t) = f_N(z,w;\t)\\
	+  \frac{i}{2}\zeta^{-N} \sum_{k=0}^{2N-1} \xi^k \vartheta\!\left(z\!-\!N\t\!+\!k\t\!+\!N\!-\!\frac{1}2;2N\t\right) R\!\left(2N w\!-\!z\!+\!N\t\!-\!k\t\!-\!N\!+\!\frac{1}{2};2N\t\right)\!.
\end{multline*}
The completed function~$\widehat f_N$ satisfies the following transformations.
\begin{prp} 
	For $m_1, m_2, \ell_1, \ell_2\in\Z$, we have
	\begin{align*}
		\widehat f_N(z+m_1\t+\ell_1,w+m_2\t+\ell_2;\t)
		=  \z^{-m_2}\xi^{2N m_2-m_1} q^{N m_2^2 -m_1m_2} \widehat f_{N}(z,w;\t).
	\end{align*}
	For $\abcd\in \sltwoz$, we have
	\[
		\widehat f_N \left(\frac {z}{c\t+d},\frac {w}{c\t+d};\frac{a\t+b}{c\t+d}\right) = (c\t+d) e^{\frac{2 \pi i c}{c\t+d}\left(-Nw^2+zw\right)} \widehat f_{N}(z,w;\t).
	\]
\end{prp}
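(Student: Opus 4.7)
The plan is to recognize $\widehat{f}_N$ as (essentially) the Zwegers-completed Appell function $\widehat{A}_{2N}$ evaluated at the arguments $(w, z - N\t)$, so that both transformations reduce to the known transformations \eqref{E:ellA}--\eqref{E:transA} of $\widehat{A}_\ell$. First, I split the double sum defining $f_N$ in \eqref{E:fN} according to the signs $\sgn(m - \frac{1}{2})$ and $\sgn(n + \frac{1}{2})$: only the two quadrants $\{m \geq 1,\, n \geq 0\}$ and $\{m \leq 0,\, n \leq -1\}$ contribute, each with weight $\pm 1$, and summing the resulting geometric series over $n$ collapses $f_N$ to the compact form
\[
	f_N(z, w;\t)\=\sum_{m\in\Z} \frac{\zeta^m q^{Nm^2}}{1 - \xi q^m}.
\]
Comparison with \eqref{eq:appel} at $\ell = 2N$ then identifies $f_N(z,w;\t) = \xi^{-N} A_{2N}(w, z - N\t;\t)$. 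Moreover, the non-holomorphic piece added to $f_N$ in the definition of $\widehat{f}_N$ coincides, up to the explicit scalar prefactor, with the non-holomorphic completion of $\widehat{A}_{2N}$ at the same arguments: the $\vartheta$- and $R$-arguments agree term by term once one takes $\ell = 2N$, first slot $w$, and second slot $z - N\t$. This yields an explicit formula expressing $\widehat{f}_N$ in terms of $\widehat{A}_{2N}(w, z-N\t;\t)$ and $f_N$.

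For the elliptic transformation, the shift $(z, w) \mapsto (z + m_1\t + \ell_1,\, w + m_2\t + \ell_2)$ induces shifts by $(m_2, \ell_2)$ and $(m_1, \ell_1)$ in the first and second argument of $\widehat{A}_{2N}(w, z-N\t;\t)$, respectively. Applying \eqref{E:ellA} with $\ell = 2N$ and the internal variables $\z = \xi$, $\xi = \zeta q^{-N}$ then produces a factor of the shape $\xi^{2Nm_2 - m_1}\zeta^{-m_2} q^{Nm_2^2 + Nm_2 - m_1 m_2}$. I combine this with the transformation of the explicit exponential prefactor and with a direct series computation of the shift of $f_N$ itself: for instance, under $z \mapsto z + \t$ one finds $f_N(z+\t,w;\t) = \xi^{-1} f_N(z,w;\t) - \xi^{-1}\Theta_N(z;\t)$, where $\Theta_N(z;\t) := \sum_{m\in\Z} \zeta^m q^{Nm^2}$ is a theta series, and this ``theta correction'' is absorbed by the non-holomorphic piece of $\widehat{A}_{2N}$. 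After cancellation, the total factor collapses to the stated $\zeta^{-m_2}\xi^{2Nm_2-m_1}q^{Nm_2^2 - m_1 m_2}$.

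For the modular transformation, the crucial subtlety is that $z - N\t$ does not transform as an elliptic variable under $\t \mapsto \t' := (a\t + b)/(c\t + d)$. A direct calculation using $ad - bc = 1$ gives
\[
	\frac{z}{c\t+d}\meno N\frac{a\t+b}{c\t+d}\=\frac{z-N\t}{c\t+d}\+N(d-1)\t'\meno Nb,
\]
so the discrepancy $N(d-1)\t' - Nb$ is a lattice vector in $\Z\t' + \Z$. I first apply \eqref{E:ellA} to $\widehat{A}_{2N}$ at modulus $\t'$ to absorb this shift, picking up the factor $e^{-2\pi i N(d-1)w/(c\t+d)}$, and then apply the modular transformation \eqref{E:transA} to pass from $\t'$ to $\t$, which contributes $(c\t+d)\,e^{\frac{\pi ic}{c\t+d}(-2Nw^2 + 2w(z - N\t))}$. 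The principal obstacle will be to verify, by careful bookkeeping of all $2\pi i$-exponents and signs and using $ad - bc = 1$, that the sum of these two exponents collapses to precisely $\frac{2\pi i c}{c\t+d}(-Nw^2 + zw)$, with any residual phase absorbed consistently by the identification of the first step. Once this algebraic simplification is carried out, the stated modular transformation follows immediately.
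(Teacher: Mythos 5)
Your proposal is correct and rests on the same key identification as the paper: split the sum over the two quadrants, sum the geometric series to get $f_N(z,w;\t)=\sum_{m\in\Z}\frac{\z^m q^{Nm^2}}{1-\xi q^m}$, recognize $f_N(z,w;\t)=\xi^{-N}A_{2N}(w,z-N\t;\t)$, and observe that the added non-holomorphic term is the matching Zwegers completion, so $\widehat f_N(z,w;\t)=\xi^{-N}\widehat A_{2N}(w,z-N\t;\t)$ (note that this forces the prefactor in the displayed definition of $\widehat f_N$ to be read as $\xi^{-N}$ rather than $\z^{-N}$, a point you gloss over with ``up to the explicit scalar prefactor''). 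Where you differ is in how the $-N\t$ shift is handled. The paper applies \eqref{E:ellA} once more, with shift $-N\t$ in the second slot, to get $\widehat A_{2N}(w,z-N\t;\t)=\xi^{N}\widehat A_{2N}(w,z;\t)$ and hence the clean identity $\widehat f_N(z,w;\t)=\widehat A_{2N}(w,z;\t)$; after that, both claimed transformations are literally \eqref{E:ellA} and \eqref{E:transA} with the two slots swapped, with no residual phases to track. You instead carry $z-N\t$ through both computations, which is why you need the lattice decomposition $\frac{z}{c\t+d}-N\frac{a\t+b}{c\t+d}=\frac{z-N\t}{c\t+d}+N(d-1)\frac{a\t+b}{c\t+d}-Nb$, an extra application of \eqref{E:ellA} at the transformed modulus, and the phase bookkeeping you flag as ``the principal obstacle''. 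That bookkeeping does close: in the modular case the residual exponent is $-\frac{2\pi iNw}{c\t+d}-\frac{2\pi iN(d-1)w}{c\t+d}-\frac{2\pi iNcw\t}{c\t+d}+2\pi iNw=0$, and in the elliptic case the $q^{Nm_2}$ coming from the internal variable $\z q^{-N}$ cancels against the $q^{-Nm_2}$ from transforming the prefactor $\xi^{-N}$, leaving exactly $\z^{-m_2}\xi^{2Nm_2-m_1}q^{Nm_2^2-m_1m_2}$. So your route is valid but strictly more laborious, and you leave the decisive cancellation asserted rather than verified; the paper's one extra use of \eqref{E:ellA} removes the obstacle before it arises. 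Finally, your aside about a ``direct series computation of the shift of $f_N$ itself'' with theta corrections absorbed by the completion is superfluous: once you work with the completed $\widehat A_{2N}$, its elliptic transformation law already accounts for the failure of quasi-periodicity of the holomorphic part, and mixing the two computations only risks double counting.
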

\begin{proof}
	We write 
	\begin{equation*}
		f_N(z,w;\tau) = \left(\sum_{\substack{n\ge0\\m\ge1}} - \sum_{\substack{n\leq -1\\m\leq 0}}\right) e^{2\pi i(mz+nw)} q^{Nm^2+mn}.
	\end{equation*}
	Assume that $1<|w|<|q|^{-1}$, i.e., $0<-w_2<\tau_2$. 
	For $m\ge1$, we have 
	\begin{equation*}
		\sum_{n\ge0} e^{2\pi i nw} q^{mn} = \frac1{1-e^{2\pi i w}q^m}.
	\end{equation*}
	For $m\le 0$, we have
	\begin{equation*}
		\sum_{n\le-1} e^{2\pi i n w} q^{mn} = \sum_{n\ge1} e^{-2\pi i n w}q^{-mn} = \frac{e^{-2\pi i w}q^{-m}}{1-e^{-2\pi i w}q^{-m}} = - \frac1{1-e^{2\pi i w}q^m}.
	\end{equation*}
	Thus we have
	\begin{equation*}
		f_N(z,w;\t) = \sum_{m\in\Z} \frac{e^{2\pi i mz}q^{Nm^2}}{1-e^{2\pi i w}q^m}.
	\end{equation*}
	We write this in terms of Zwegers' multivariable Appell functions (see \eqref{eq:appel}) as 
	\begin{align*}
		f_N(z,w;\t) &=   \sum_{m\in\Z} \frac{q^\frac{2Nm(m+1)}2 q^{-Nm}\z^{m} }{1-\xi q^m}  = \xi^{-N} A_{2N}(w,z-N\t;\t),
	\end{align*}
	from which the completion of $f_N$ follows directly, i.e.,
	\begin{equation*}
		\widehat{f}_N(z,w;\tau) = \xi^{-N} \widehat{A}_{2N}(w,z-N\tau;\tau).
	\end{equation*}
Using \eqref{E:ellA}, we obtain $\widehat A_{2N}(w,z-N\tau;\tau) = \xi^N \widehat A_{2N}(w,z;\tau)$ and thus
\begin{equation*}
	\widehat f_N(z,w;\tau) = \widehat A_{2N}(w,z;\tau),
\end{equation*}
so that the transformations follow from \eqref{E:transA} and \eqref{E:ellA}.
\end{proof}
The functions $  \widehat f_{N}(0,z;\t)$ and $\widehat f_{N}(Nz,z;\t)$ transform as Jacobi forms of weight~$1$ and index~$-N$ and~$0$, respectively:
\begin{cor}\label{cor:fN}
	For $m, \ell\in\Z$, we have
	\begin{align*}
		\widehat f_N(0,z+m\t+\ell;\t)	&= \z^{2N m} q^{N m^2} \widehat f_{N}(0,z;\t), \\
		\widehat f_N(N(z+m\t+\ell),z+m\t+\ell;\t) &=   \widehat f_{N}(Nz,z;\t).
	\end{align*}
For $\abcd\in \sltwoz$, we have
\begin{align*}
	\widehat f_N \left(0,\frac{z}{c\t+d};\frac{a\t+b}{c\t+d}\right) &= (c\t+d) e^{-\frac{2\pi icNz^2}{c\t+d}} \widehat f_{N}(0,z;\t), \\
	\widehat f_N \left(\frac {Nz}{c\t+d},\frac{z}{c\t+d};\frac{a\t+b}{c\t+d}\right) &= (c\t+d) \widehat f_{N}(Nz,z;\t).
\end{align*}
\end{cor}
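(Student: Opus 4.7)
The plan is to derive the corollary as a direct specialization of the preceding proposition, by substituting the two pairs $(z,w)=(0,z)$ and $(z,w)=(Nz,z)$ into the elliptic and modular transformation laws of $\widehat f_N(z,w;\tau)$ proved there. In each of the four assertions, the work reduces to verifying that the generic multiplier collapses to the stated expression; no new analytic input is needed beyond the previous proposition.

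For the elliptic identities I choose $(m_1,\ell_1,m_2,\ell_2)$ so that the shifts on the left-hand side match those required by the corollary. In the first case the first argument of $\widehat f_N$ stays at $0$, so I take $m_1=\ell_1=0$ and $(m_2,\ell_2)=(m,\ell)$. The general multiplier $\zeta^{-m_2}\xi^{2Nm_2-m_1}q^{Nm_2^{2}-m_1m_2}$ becomes $1\cdot e^{2\pi i(2Nm)z}\cdot q^{Nm^{2}}$, i.e.\ $\zeta^{2Nm}q^{Nm^{2}}$ after re-reading $\zeta$ as $e^{2\pi iz}$ with $z$ the single remaining variable. In the second case I shift the first argument by $N$ times the shift of the second, which corresponds to $(m_1,\ell_1)=(Nm,N\ell)$ and $(m_2,\ell_2)=(m,\ell)$. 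The multiplier then simplifies to $e^{-2\pi iNmz}\cdot e^{2\pi i(2Nm-Nm)z}\cdot q^{Nm^{2}-Nm\cdot m}=1$, so $\widehat f_N(Nz,z;\tau)$ is genuinely invariant under the shift, as claimed.

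For the modular identities I substitute $w=z$ into the formula $\widehat f_N\bigl(\tfrac{z}{c\tau+d},\tfrac{w}{c\tau+d};\tfrac{a\tau+b}{c\tau+d}\bigr)=(c\tau+d)\,e^{\frac{2\pi ic}{c\tau+d}(-Nw^{2}+zw)}\widehat f_N(z,w;\tau)$. When the first argument is $0$, the exponent reduces to $-Nz^{2}$, producing the factor $e^{-2\pi icNz^{2}/(c\tau+d)}$ of the first modular assertion. When the first argument is $Nz$, the exponent $-Nz^{2}+(Nz)z=0$ vanishes, and only the automorphy factor $(c\tau+d)$ survives, yielding the second modular assertion. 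The only point requiring care is the bookkeeping of which of $\zeta$ and $\xi$ in the proposition corresponds to which variable under the collapse to a single $z$; once that is tracked, the four identities are immediate. Consequently I do not anticipate a genuine obstacle here — the content of the corollary is precisely that the diagonals $w=z$ combined with either $z=0$ or $z=Nz$ are loci on which the mixed multiplier from the proposition either loses its $z$-dependence entirely or reorganizes into the clean Jacobi-form multiplier of index $-N$ or $0$.
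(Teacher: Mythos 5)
Your proposal is correct and matches the paper's (implicit) argument: the corollary is stated as a direct consequence of the preceding proposition, obtained exactly by the specializations $(z,w)\mapsto(0,z)$ and $(z,w)\mapsto(Nz,z)$ with the shift data $(m_1,\ell_1,m_2,\ell_2)=(0,0,m,\ell)$ and $(Nm,N\ell,m,\ell)$, and your evaluation of the multipliers is accurate in all four cases.
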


\subsection{Proof of \Cref{thm:falsemock}}
\begin{proof}[{Proof of \Cref{thm:falsemock}}] Let $N\in \Z$. Consider functions on $\N$ given by 
\[ D_{N,m}(r)=\delta_{r\geq Nm}, \quad  X_\xi(r) = \sum_{j=1}^r\xi^j.\]
Now, first we compute using \Cref{lem:connected}
	\[
		\langle t_N \otimes s^*\, \rangle_{q} \= \sum_{m,r\geq 1} \left(\zeta^m+\zeta^{-m}\right)\left(\varrho^m g_{N,m,\xi}(r)\meno\varrho^{-m}g_{N,m,\xi^{-1}}(r)\right)  q^{mr}
	\]
	with
	\begin{equation*}
	 g_{N,m,\xi}(r) \= \partial\left(D_{N,m}X_\xi\right) \meno \partial(D_N) \ast \partial\left(X_\xi\right) \= \delta_{r,Nm}X_\xi(r) \+ \xi^r\delta_{r >Nm} \meno \xi^{r-Nm}\delta_{r>Nm}.
	\end{equation*}
	Hence, we have
	\begin{align*}
	\langle t_N \otimes s^*\, \rangle_{q} 
	= \sum_{m\geq 1} \left(\zeta^m+\zeta^{-m}\right)\varrho^m\left(\left(\xi+\ldots+\xi^{Nm}\right) q^{Nm^2} \+ \sum_{r > Nm}
	\left(\xi^r-\xi^{r-Nm}\right) q^{mr}\right) \hspace{1.4cm}\\
	-  \sum_{m\geq 1} \left(\zeta^m+\zeta^{-m}\right)\!\varrho^{-m}\!\left(\left(\xi^{-1}+\ldots+\xi^{-Nm}\right)\! q^{Nm^2} \+ \sum_{r > Nm}
	\left(\xi^{-r}-\xi^{-r+Nm}\right)\! q^{mr}\right) \\
	= \sum_{m\in \Z} \sgn(m)\! \left(\zeta^m+\zeta^{-m}\right)\!\varrho^m\! \left(\left(\xi^{\sgn(m)}+\ldots+\xi^{Nm}\right)\! q^{Nm^2}
	+ \sum_{\substack{s\in \Z \\ ms>0}}\left(\xi^{s+am}-\xi^s\right)\! q^{m(Nm+s)}\right)\!.
	\end{align*}
Note that for $m\neq 0$ one has
\[ 
	\xi^{\sgn(m)} = \frac{\xi^{m}-\xi^{-m}}{2} + \sgn(m)\frac{\xi^{m}+\xi^{-m}}{2}. 
\]
Hence,
\begin{multline*}
\sum_{m\in \Z} \sgn(m) \left(\zeta^m+\zeta^{-m}\right)\varrho^m \left(\xi^{\sgn(m)}+\ldots+\xi^{Nm}\right) q^{Nm^2} \\
=\sum_{m\in \Z} \left(\zeta^m+\zeta^{-m}\right)\varrho^m \frac{1-\xi^{-Nm}}{2 (\xi-1)}\rb{\left(\xi^{Nm+1}+1\right)+\sgn\left(m+\frac{1}{2}\right)\left(\xi^{Nm+1}-1\right)} q^{Nm^2}.
\end{multline*}
We conclude
\begin{align}\label{tsbracket}
&\langle t_N \otimes s \rangle_{q}  \nonumber
=\frac{1}{2(\xi-1)}\Bigl(\Theta^{\ast}\!\left(\z\varrho\xi^N;q\right)+\Theta^{\ast}\!\left(\z^{-1}\varrho\xi^N;q\right)-\Theta^{\ast}\!\left(\z\varrho\xi^{-N};q\right)-\Theta^{\ast}\!\left(\z^{-1}\varrho\xi^{-N};q\right)\\*
&\hspace{0.6cm} + T^{\ast}\!\left(\z\varrho\xi^N;q^N\right)+T^{\ast}\!\left(\z^{-1}\varrho\xi^N;q^N\right)+T^{\ast}\!\left(\z\varrho\xi^{-N};q^N\right)+T^{\ast}\!\left(\z^{-1}\varrho\xi^{-N};q^N\right)\\*
&\hspace{0.6cm}-2T^{\ast}\!\left(\zeta\varrho;q^N\right)-2T^{\ast}\!\left(\zeta^{-1}\varrho;q^N\right)\Bigr) + f_N\left(Nz,z;\tau\right)-f_N\left(0,z;\tau\right) \nonumber.
\end{align}
The transformation properties of $T$ and $f_N$ in \Cref{prop:Subgroup4N} and \Cref{cor:fN} complete the proof.
\end{proof}

\end{document}